\newtheorem{lemma}{Lemma}
\newtheorem{remark}{Remark}
\newtheorem{theorem}{Theorem}
\newtheorem{proposition}{Proposition}
\title{Anti-concentration applied to roots of randomized derivatives of polynomials}
\author{Andr\'e Galligo}
\address{Universit\'e C\^ote d'Azur,
Laboratoire de Math\'ematiques
"J.A. Dieudonn\'e", UMR CNRS 7351, Inria, 
Nice, France. }
\email{andre.galligo@univ-cotedazur.fr}
\author{Joseph Najnudel}
\address{University of Bristol, School of Mathematics, Bristol, United Kingdom.}
\email{joseph.najnudel@bristol.ac.uk}
\author{Truong Vu$^{*}$ }
\address{University of Illinois at Chicago, Department of Mathematics, Statistics, and Computer Science, Chicago, USA.}
\email{tvu25@uic.edu}
\thanks{$^*$ Corresponding author}
\subjclass{30C15, 30C10, 60B20, 60G57}
\date{}
\begin{document}

\begin{abstract}
     Let $(Z^{(n)}_k)_{1 \leq k \leq n}$ be a random set of points and let $\mu_n$  be its \emph{empirical measure}:
     $$\mu_n = \frac{1}{n} \sum_{k=1}^n \delta_{Z^{(n)}_k}. $$
 Let    
$$P_n(z) := (z - Z^{(n)}_1)\cdots (z - Z^{(n)}_n)\quad \text{and}\quad Q_n (z) := \sum_{k=1}^n \gamma^{(n)}_k
\prod_{1 \leq j \leq n, j \neq k} 
(z- Z^{(n)}_j),
$$
where $(\gamma^{(n)}_k)_{1 \leq k \leq n}$ are i.i.d. random variables
with Gamma distribution of parameter $\beta/2$, for some fixed $\beta > 0$. We prove that in the case where $\mu_n$ almost surely tends to $\mu$ when $n \rightarrow \infty$, the empirical measure of the complex zeros of the \emph{randomized derivative} $Q_n$ also converges almost surely to $\mu$ when $n$ tends to infinity. Furthermore, for $k = o(n / \log n)$, we obtain that the zeros of the $k-$th \emph{randomized derivative} of $P_n$ converge to the limiting measure $\mu$ in the same sense. We also derive the same conclusion for a variant of the randomized derivative related to the unit circle. 

\end{abstract}

\maketitle

\section{Introduction}
Suppose that $(Z^{(n)}_k)_{1 \leq k \leq n}$ is a random family of points in $\mathbb{C}$. For each $n \geq 1$, we define a random polynomial and its \emph{randomized derivative} as follows
\begin{align*}
    P_n(z)  = \prod_{j=1}^n (z - Z^{(n)}_j) \quad \text{and} \quad Q_n (z) = \sum_{k=1}^n \gamma^{(n)}_k
\prod_{1 \leq j \leq n, j \neq k} 
(z- Z^{(n)}_j),
\end{align*}
where $(\gamma^{(n)}_k)_{1 \leq k \leq n}$ are i.i.d. random variables
with Gamma distribution of parameter $\beta/2$, for some fixed $\beta > 0$. We define the empirical measure of $P_n$
$$\mu_n := \frac{1}{n} \sum_{k=1}^n \delta_{Z^{(n)}_k}.$$
Then, for $n \geq 2$, we define by $\nu_n$ the empirical 
measure of the zeros of $Q_n$, counted with multiplicity. The limit case $\beta =\infty$ corresponds to $\gamma^{(n)}_k = 1/n$, in which case 
$Q_n$ is $1/n$ times the derivative of $P_n$.  
It has been extensively researched how the distribution of zeros in random polynomials and the distribution of zeros in their (usual) derivatives relate to one another. In the case $(Z^{(n)}_k)_{1 \leq k \leq n}$ are i.i.d. random variables sampled from a probability measure $\mu$,  Pemantle and Rivin \cite{pemantle_rivin}  conjectured the empirical measure of $P_n$ and its derivative (in the usual sense) converge in distribution to the same limiting measure $\mu$.
\subsection{Previous work}
When $\mu$ is supported on the unit circle, Subramanian \cite{subramanian} demonstrated the validity of the Pemantle-Rivin conjecture. The conjecture of Pemantle and Rivin for \emph{all} probability measures $\mu$ was verified by Kabluchko in a seminal work \cite{Kabluchko2015}. Extending Kabluchko's work, Byun, Lee, and Reddy \cite{byun-lee-reddy} settled that for each fixed $k$ derivatives. With an eye toward extremely high derivatives, Steinerberger \cite{steinerberger2019nonlocal}, O'Rourke and Steinerberger \cite{o2021nonlocal} propose that the random measure $\mu_n^{\lfloor t n\rfloor}$ converges to a deterministic measure $\mu_t$ for any fixed $t \in [0,1]$.  O'Rourke and Steinerberger postulate that a specific partial differential equation is satisfied by the logarithmic potentials of the limiting measure $(\mu_t)_t$ when the underlying measure $\mu$ is radial. When $\mu$ is not assumed radial, the work of the first author \cite{Galligo2022} considered a specific
system of two PDEs to
describe the motion.  Analysts \cite{alazard2022dynamics,kiselev2022flow} have examined this partial differential equation, and Hoskins and Steinerberger \cite{hoskins2022semicircle} (see also \cite{hoskins2021dynamics,kabluchko2021repeated}) have proven O'Rourke and Steinerberger's conjecture in the special case where $\mu$ has real support. According to O'Rourke and Steinerberger's prediction, the limiting measure in the $t = o(1)$ situation should equal the underlying measure $\mu$. Michelen and the third author \cite{michelen2022zeros} recently demonstrated that the same holds if the number of derivatives rises in $n$ a little bit less slowly than logarithmically  $k \leq \frac{\log n}{5 \log \log n}$.  With the majority of the new components coming in to handle an anti-concentration estimate, the works \cite{byun-lee-reddy} and \cite{michelen2022zeros} on convergence of higher derivatives employ the same general strategy as Kabluchko's original proof \cite{Kabluchko2015}. According to the conjectures of Cheung-Ng-Yam \cite{cheung2014critical} and Kabluchko (see \cite{michelen2022zeros}), the empirical measure corresponding to the zeros of the first derivative of the random polynomial should actually weakly converge to $\mu$ \emph{almost surely} rather than merely in probability.  Angst, Malicet, and Poly \cite{angst2024almost} proved this conjecture recently for all probability measures $\mu$.  Additionally, Angst, Malicet, and Poly \cite{angst2024almost} conjectured that for every fixed $k\in\mathbb{N}$, the almost-sure convergence of the empirical measure corresponding to the $k-$th derivatives should hold. Very recently, in \cite{michelen2023almost}, Michelen and the third author confirmed their conjecture. The following works and their references provide additional information on this model and related ones: \cite{alazard2022dynamics,byun-lee-reddy, cheung2014critical,cheung2015higher, Galligo2022, hoskins2021dynamics, hoskins2022semicircle,Kabluchko2015,kabluchko2021repeated, kiselev2022flow,o2021nonlocal,hall2023roots,steinerberger2019nonlocal,steinerberger2023free,subramanian}.

\subsection{Our project}
In this paper, we study the relation between the distribution of zeros of random polynomials and the distribution of zeros of its \emph{randomized derivatives} with more general setting for initial random points $(Z^{(n)}_k)_{1 \leq k \leq n}$. This randomization is motivated by the following points: 
\begin{itemize}
\item If the polynomial $P_n$ has distinct roots $(Z^{(n)}_k)_{1 \leq k \leq n}$, then the zeros of $P'_n$ are given by the following equation in $z$: 
$$\sum_{k=1}^n \frac{1}{z - Z^{(n)}_k} = 0.$$
\item If $(Z^{(n)}_k)_{1 \leq k \leq n}$ has a limiting empirical distribution $\mu$, we can expect 
that approximately: 
$$\frac{1}{n} \sum_{k=1}^n \frac{1}{z - Z^{(n)}_k}  \simeq \int_{\mathbb{C}} \frac{d \mu(u)}{z - u}.$$
\item For values of $z$ such that the last integral is not close to zero, the left-hand side can vanish only if the approximation is not valid. 
\item This typically occurs if $z$ is close to one of the roots $Z^{(n)}_k$ of $P_n$: if it is at distance 
of order $1/n$, then the corresponding term 
$ \frac{1}{n(z - Z^{(n)}_k)}$ has order of magnitude $1$, and then can compensate the value of the integral with respect to $\mu$. 
\item It is then natural to expect that most of the roots of $P'_n$ are at distance of order $1/n$ from a root of $P_n$, which  suggest that the empirical measure of the roots of $P'_n$ is very close to the empirical measure of the roots of $P_n$. More precise results related to this intuition are given in \cite{SeanO’Rourke_NoahWilliams2020}.
\item These consideration can be extended to the case where we consider the roots of the randomized derivative $Q_n$, giving the equation: 
$$\sum_{k=1}^n \frac{\gamma_k^{(n)}}{z - Z^{(n)}_k} = 0.$$
\item Hence, it is natural to expect similar comparison between the roots of $P_n$ and $Q_n$: most of the roots of $Q_n$ are expected to be very close to roots of $P_n$.
\end{itemize}
Another motivation of introducing this randomization comes from random matrix theory. Indeed, for $\beta = 1$ or $\beta =2 $, the zeros of the randomized derivative correspond to the eigenvalues of the $(n-1) \times (n-1)$ top-left minor of the matrix
$UDU^{-1}$, $D$ being diagonal with diagonal entries $(Z^{(n)}_k)_{1 \leq k \leq n}$, $U$ being independent, Haar-distributed on the orthogonal group $O(n)$ (for $\beta =1$) or the unitary group $U(n)$ (for $\beta = 2$): see Subsection \ref{ssection:e-value_minors} below for more detail. Randomized derivative defined as above is also related to $\beta$-ensembles for general $\beta > 0$: see \cite{AssiotisNajnudel}. 
The connection with random matrix theory is the reason why we choose Gamma-distributed weights $\gamma_k^{(n)}$: more general distributions can be considered, as discussed in Remark \ref{generaldistribution} below. 

Proofs that iterated derivatives of a polynomial preserves the limiting empirical distribution of the roots use anti-concentration estimates, in order to show that suitable random variables have small probability to be close to some given values. In the present setting, introducing randomness smooth the distribution of the random variables which are involved, which implies stronger anti-concentration results. From that, we are able to prove results which are stronger than what has been proven in the case of non-randomized derivative. 

Our main result is the following: we show that, for $k=o(\frac{n}{\log n})$ (significantly larger than what it obtained in the work of Michelen and the third author \cite{michelen2022zeros}), the empirical measures corresponding the random polynomial and its $k-$th \emph{randomized derivatives} converge to the same limiting measure. The main ingredient is using anti-concentration properties obtained through the randomness involved in the \emph{randomized derivatives}.  In the present article, we prove that in the case where $\mu_n$ almost surely tends to $\mu$ when $n \rightarrow \infty$, the empirical measure of the complex zeros of the \emph{randomized derivative} $Q_n$ also converges almost surely to $\mu$ when $n$ tends to infinity. Furthermore, for $k = o(n / \log n)$, we obtain that the zeros of the $k-$th \emph{randomized derivative} of $P_n$ converge to the limiting measure $\mu$ in the same sense. We also derive the same conclusion for a variant of the randomized derivative related to the unit circle. 

\subsection{Organisation of the paper}
The remainder of the paper is organized as follows. In Section \ref{sec:rel_RMT}, we discuss the relation with random matrices theory. In Section \ref{sec:mainresults}, we state precisely our main results. In Section \ref{sec:usual_randomized}, we present the proofs of Theorem \ref{thm:a.e.converg1stderivative}, Theorem \ref{thm:a.e.converg_n_logn_derivative_prop3} and Theorem \ref{thm:a.e.converg_n_logn_derivative_prop5}. The proofs of Theorem \ref{thm:prop7}, Theorem \ref{thm:prop8} and Theorem \ref{thm:a.e.converg_n_logn_derivative_prop9} are similar to Theorem \ref{thm:a.e.converg1stderivative}, \ref{thm:a.e.converg_n_logn_derivative_prop3} and Theorem \ref{thm:a.e.converg_n_logn_derivative_prop5}, and build on similar technical results as those presented in Section \ref{sec:usual_randomized}. The details will be presented in Section \ref{sec:variation_randomized}.

\section{Relation with Random matrix theory}\label{sec:rel_RMT}
\subsection{Eigenvalues of minors}\label{ssection:e-value_minors}
  The cases $\beta =1$ and $\beta =2$ can be interpreted in terms of random matrices. Indeed, let 
 us consider the matrix 
$$M = U^* \operatorname{Diag}(\lambda_1, \dots, \lambda_n) U$$
where $U \in U(n)$, 
$\lambda_1, \dots, \lambda_n \in \mathbb{C}$. 
Then, the characteristic polynomial of the top-left $(n-1) \times (n-1)$ minor of $M$ is the cofactor of index $nn$ of the matrix 
$$U^* \operatorname{Diag}(\lambda_1 -z, \dots, \lambda_n-z ) U  \in \mathcal{M}_n (\mathbb{C}(z)),$$
which is equal to 
$$\operatorname{\det} (U^* \operatorname{Diag}(\lambda_1 -z, \dots, \lambda_n-z ) U ) \left( (U^* \operatorname{Diag}(\lambda_1 -z, \dots, \lambda_n-z ) U )^{-1} \right)_{nn}$$
and then to 
$$\prod_{j=1}^n (\lambda_n - z) \, (U^* \operatorname{Diag}((\lambda_1 -z)^{-1}, \dots, (\lambda_n-z)^{-1} ) U )_{nn}
$$
or 
$$\prod_{j=1}^n (\lambda_j - z)  
\sum_{j=1}^n U^*_{nj} (\lambda_j - z)^{-1} U_{jn}= \sum_{j=1}^n |U_{jn}|^2 \prod_{k \neq j} (\lambda_k- z).$$
 This gives the randomization setting above for $\beta = 2$, since for $U$ uniform on $U(n)$, $(|U_{jn}|^2)_{1 \leq j \leq n}$ is 
 Dirichlet distributed with all parameters equal to $1$, and then has the same joint distribution as 
 $$\left(\frac{\gamma^{(n)}_j}{\sum_{k=1}^n \gamma^{(n)}_k}\right)_{1 \leq j \leq n}.$$
The case $\beta =1$ is similarly obtained for $U$ uniform on $O(n)$, in which case $(|U_{jn}|^2)_{1 \leq j \leq n}$ is 
 Dirichlet distributed with all parameters equal to $1/2$. 
\subsection{Eigenvalues of products of reflexions}\label{ssec:con}
 The circular variant of the randomized derivative
$$V_n(z) = P_n(z)  \sum_{k=1}^n  \frac{Z^{(n)}_k + z}{Z^{(n)}_k - z} \gamma^{(n)}_k $$
is related to the evolution of the eigenvalues of a unitary matrix when it is multiplied by a complex reflection. We refer to the beginning of Najnudel-Virag paper \cite{najnudel2021bead} for a discussion on this setting. Under some conditions which are specified in that paper, 
the new eigenvalues are obtained from the initial eigenvalues
$(Z^{(n)}_k)_{1 \leq k \leq n}$ by solving the equation
$$\sum_{k=1}^n 
\frac{Z^{(n)}_k + z}{Z^{(n)}_k - z} \rho^{(n)}_k
= \frac{1 + \eta}{1-\eta}$$
for some complex number $\eta $ of modulus one, $(\rho^{(n)}_j)_{1 \leq j \leq n}$ being Dirichlet distributed with all parameters equal to $\beta/2$ for $\beta =2$. 
If we generalize the setting to all $\beta > 0$, and if we consider the particular case $\eta = -1$, we get the equation 
$$\sum_{k=1}^n 
\frac{Z^{(n)}_k + z}{Z^{(n)}_k - z} \gamma^{(n)}_k = 0,$$
i.e. $V_n(z) = 0$, if the Dirichlet distribution is realized by taking 
$$ \rho^{(n)}_k = \frac{\gamma^{(n)}_k }{ \sum_{j=1}^n \gamma^{(n)}_j }. $$
Notice that for $(Z^{(n)}_j)_{1 \leq j \leq n}$ distinct points on the unit circle, $$\sum_{k=1}^n  \frac{Z^{(n)}_k + z}{Z^{(n)}_k - z} \gamma^{(n)}_k $$
 is purely imaginary when $z$ is on the unit circle, and goes from $- i\infty$ to $i \infty$ when $z$ spans the arc between two consecutive poles: one deduces that all solutions of the equation are on the unit circle, and that they interlace between the points $(Z^{(n)}_j)_{1 \leq j \leq n}$. 
The limiting case $\beta = \infty$ gives 
$$\frac{V_n(z)}{P_n(z)} = \frac{1}{n} \sum_{k=1}^n 
\frac{Z^{(n)}_k + z}{Z^{(n)}_k - z} = 
 \sum_{k=1}^n  \frac{2z/n}{Z^{(n)}_k - z} + \sum_{k=1}^n  \frac{(Z^{(n)}_k - z)/n}{Z^{(n)}_k - z}
 = - \frac{2z}{n} \frac{P_n'(z)}{P_n(z)} + 1,
$$
i.e. 
$$V_n(z) = -\frac{2}{n} \left(z P'_n(z) - \frac{n}{2} P_n(z) \right).$$
\section{Statements of our results}\label{sec:mainresults}
We first prove the following
result for the  first order randomized derivative:

\begin{theorem}\label{thm:a.e.converg1stderivative}
If $(\mu_n)_{n \geq 1}$ converges almost surely to a deterministic measure $\mu$ on $\mathbb{C}$, then 
$(\nu_n)_{n \geq 2}$ converges almost surely to $\mu$. 
\end{theorem}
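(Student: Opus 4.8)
Throughout write $S_n(z):=\sum_{k=1}^n \gamma^{(n)}_k/(z-Z^{(n)}_k)$, so that $Q_n(z)=P_n(z)\,S_n(z)$; we use freely that the weights $\gamma^{(n)}_k$ are independent of the points $(Z^{(n)}_k)_k$. The plan is to work with logarithmic potentials. For $\phi\in C_c^\infty(\mathbb{C})$, applying $\Delta\log|\cdot-w|=2\pi\delta_w$ to the factorizations of $P_n$ and $Q_n$, and using $\int_{\mathbb{C}}\Delta\phi=0$ to discard the leading coefficient $\sum_k\gamma^{(n)}_k$, gives the exact identity
\[
\langle\nu_n,\phi\rangle=\frac{n}{n-1}\,\langle\mu_n,\phi\rangle+\frac{1}{2\pi(n-1)}\int_{\mathbb{C}}\log|S_n(z)|\,\Delta\phi(z)\,dz .
\]
Since $\mu_n\to\mu$ weakly a.s., the first term tends to $\langle\mu,\phi\rangle$ a.s.; hence the theorem follows once we show
\[
\frac{1}{n}\int_K\bigl|\log|S_n(z)|\bigr|\,dz\xrightarrow[n\to\infty]{}0\qquad\text{a.s., for every compact }K,
\]
and then deduce tightness of $(\nu_n)$ a posteriori.

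For the positive part this is soft: from $|S_n(z)|\le(\max_k\gamma^{(n)}_k)\cdot n\int|z-w|^{-1}\,d\mu_n(w)$, the almost-sure bound $\max_k\gamma^{(n)}_k=O(\log n)$ (Borel--Cantelli, exponential Gamma tail), and the deterministic estimate $\int_K\!\int|z-w|^{-1}d\mu_n(w)\,dz=\int\!\bigl(\int_K|z-w|^{-1}dz\bigr)d\mu_n(w)\le C_K$, one obtains $\frac1n\int_K\log^+|S_n(z)|\,dz\to0$ a.s.

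The \emph{negative part is the crux}, and it is here that anti-concentration enters. Conditionally on $(Z^{(n)}_k)_k$, $S_n(z)=\sum_k\gamma^{(n)}_k a_k$ with $a_k=1/(z-Z^{(n)}_k)$ is a linear combination of independent $\mathrm{Gamma}(\beta/2)$ variables with fixed complex coefficients. The Lévy concentration function of one such variable obeys $\sup_t\mathbb{P}(\gamma\in[t,t+s])\le C_\beta s^\alpha$ with $\alpha:=\min(1,\beta/2)>0$. Fixing $R$ with $\mu(D_R)>1/2$ (so $\mu_n(D_R)>1/2$ for all large $n$, a.s.), we select among the indices $k$ with $Z^{(n)}_k\in D_R$ the one with largest $|a_k|$; its modulus is at least $(|z|+R)^{-1}$. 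Projecting $S_n(z)$ onto the direction of that coefficient and conditioning on the remaining weights gives, for each fixed $z$,
\[
\mathbb{P}\bigl(|S_n(z)|<\varepsilon,\ \mu_n(D_R)>1/2\bigr)\le C_\beta\bigl(2\varepsilon(|z|+R)\bigr)^{\alpha}.
\]
Plugging this into the layer-cake identity yields $\mathbb{E}\bigl[(\log^-|S_n(z)|)^2\,\mathbf{1}_{\{\mu_n(D_R)>1/2\}}\bigr]\le C'_z$ with $C'_z$ locally bounded in $z$; by Cauchy--Schwarz, $\mathbb{E}\bigl[\bigl(\int_K\log^-|S_n(z)|\,dz\bigr)^2\mathbf{1}_{\{\mu_n(D_R)>1/2\}}\bigr]\le B_K$ uniformly in $n$. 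A Markov inequality then makes $\mathbb{P}\bigl(\int_K\log^-|S_n|\,dz>\varepsilon n,\ \mu_n(D_R)>1/2\bigr)$ summable in $n$; applying Borel--Cantelli on each of the events $\{\mu_n(D_R)>1/2\text{ for all }n\ge n_0\}$ (whose union has probability one) gives $\frac1n\int_K\log^-|S_n(z)|\,dz\to0$ a.s.

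Combining the two parts, $\langle\nu_n,\phi\rangle\to\langle\mu,\phi\rangle$ a.s., simultaneously for every $\phi\in C_c^\infty(\mathbb{C})$. Tightness of $(\nu_n)$ is then automatic: each $\nu_n$ is compactly supported, and testing against bumps $\mathbf{1}_{D_R}\le\phi\le\mathbf{1}_{D_{R'}}$ gives $\liminf_n\nu_n(D_{R'})\ge\mu(D_R)\to1$. Hence every subsequential weak limit of $(\nu_n)$ is a probability measure that agrees with $\mu$ against $C_c^\infty(\mathbb{C})$, so equals $\mu$, and $\nu_n\to\mu$ weakly a.s. The principal obstacle is the anti-concentration input: both the quantitative Lévy bound for weighted sums of Gamma variables (which I handle softly through a single well-chosen coordinate, at the price of controlling the largest coefficient via confinement of the bulk of $\mu_n$ to a fixed disk) and the transfer of this pointwise-in-$z$ estimate to the almost-sure convergence of the spatial integral, for which the second-moment bound and the conditioning on $\{\mu_n(D_R)>1/2\}$ are the decisive devices.
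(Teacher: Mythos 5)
Your proof is correct, and it shares the same backbone as the paper's (logarithmic potentials, the factorization $Q_n=P_n S_n$ with $S_n(z)=\sum_k\gamma^{(n)}_k/(z-Z^{(n)}_k)$, a $\log_+/\log_-$ split, anti-concentration for the $\log_-$ part, Borel--Cantelli, and a countable family of test functions), but the technical execution genuinely differs at each of the two critical estimates. For $\log_+$, the paper runs a covering argument (the set where $\log_+|S_n|$ exceeds $(10+(A+1)/c)\log n+m$ is contained in $n$ disks of radius $O(e^{-m}n^{-9})$), whereas you use the soft inequalities $\log^+(ab)\le\log^+a+\log^+b$ and $\log^+x\le x$ together with Fubini to reduce everything to the uniform bound on $\int_K\int|z-w|^{-1}\,d\mu_n(w)\,dz$; this is cleaner and gives the same $O(\log n)$ order once $\max_k\gamma^{(n)}_k=O(\log n)$ a.s. For $\log_-$, the paper anchors the anti-concentration on the globally smallest-modulus point $Z^{(n)}_1$, truncates $\log_-$ at $K_4\log n$, and bounds the \emph{first} moment of the excess, producing the explicit tail $C n^{-A}(1+\mathbb{E}\min_k|Z^{(n)}_k|)$ of Proposition~\ref{prop1}; you instead anchor on the $z$-nearest point inside a fixed disk $\mathbb{D}_R$ (using the hypothesis $\mu_n\to\mu$ to guarantee $\mu_n(\mathbb{D}_R)>1/2$ eventually) and bound the untruncated \emph{second} moment of $\log_-|S_n(z)|$, then apply Markov with the extra factor $n^{-2}$. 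Your route buys a shorter, self-contained proof of Theorem~\ref{thm:a.e.converg1stderivative} with no dependence on $\min_k|Z^{(n)}_k|$; the paper's route is heavier precisely because it isolates Proposition~\ref{prop1} as a standalone quantitative statement with polynomial rate $n^{-A}$ for arbitrary $A$, which is then reused (via the union over $m(n)$ iterations) to prove Theorems~\ref{thm:a.e.converg_n_logn_derivative_prop3} and~\ref{thm:a.e.converg_n_logn_derivative_prop5}. Your second-moment bound would not suffice for those later results because $O(n^{-2})$ is not summable after multiplying by $m(n)=o(n/\log n)$, but for the theorem actually asked about it is perfectly adequate.
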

Our key ingredient for the proof of Theorem \ref{thm:a.e.converg1stderivative} is to handle an anti-concentration problem which is the content of Proposition \ref{prop1} in Section \ref{sec:usual_randomized}.\\

We can now iterate the construction, similarly as in the case where higher order derivatives are considered. 
Fix $\beta > 0$, and for $n \geq 2$, we choose an integer $m(n)$, $1 \leq m(n) \leq n-1$, representing the number of iterations which are performed. We fix a random set of points $(Z_k^{(n,0)})_{1 \leq k \leq n}$
and an independent set of i.i.d. random variables $(\gamma_k^{(n,j)})_{1 \leq j \leq m(n), 1 \leq k \leq n+1 - j}$, with Gamma distribution of parameter $\beta/2$. Then, for $1 \leq j \leq m(n)$, we inductively define the set of points $(Z_k^{(n,j)})_{1 \leq k \leq n-j}$, as the set of zeros, counted with multiplicity, of the polynomial 
$$z \mapsto \sum_{k=1}^{n+1-j} \gamma^{(n,j)}_k   \prod_{1 \leq \ell \leq n+1-j, \ell \neq k} (z - Z_{\ell}^{(n,j-1)}).$$ 
For $n \geq 2$ and $0 \leq j \leq m(n)$, let
$$\mu_{n,j} := \frac{1}{n-j} \sum_{k=1}^{n-j} \delta_{Z_k^{(n,j)}}.$$
We obtain the following results for higher randomized derivatives:
\begin{theorem} \label{thm:a.e.converg_n_logn_derivative_prop3}
We assume that $\mu_{n,0}$ converges almost surely to a deterministic measure $\mu$ on $\mathbb{C}$, 
that 
$$\frac{m(n)}{n/\log n} \underset{n \rightarrow \infty}{\longrightarrow} 0$$
and that for some $B \geq 0$, 
$$ \sup_{n \geq 2} n^{-B} \sup_{ 0 \leq j \leq m(n)-1} \mathbb{E} \left[ \min_{1 \leq k \leq n-j} |Z_k^{(n,j)}| \right] < \infty. 
$$
Then, $(\mu_{n, m(n)})_{n \geq 2}$ converges almost surely to $\mu$. 
\end{theorem}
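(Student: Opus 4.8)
The plan is to test the measures against smooth compactly supported functions, convert the iteration into a telescoping identity for the logarithmic moduli of the polynomials, and control the remaining error via the anti-concentration mechanism behind Proposition~\ref{prop1}. Write $m=m(n)$, let $P_{n,j}(z)=\prod_{k=1}^{n-j}(z-Z_k^{(n,j)})$ be the monic polynomial with the level-$j$ roots, and set
\[
S_{n,j}(z)=\sum_{k=1}^{n-j}\frac{\gamma_k^{(n,j+1)}}{z-Z_k^{(n,j)}},\qquad \Gamma_{j+1}=\sum_{k=1}^{n-j}\gamma_k^{(n,j+1)}.
\]
Since the polynomial defining level $j+1$ equals $P_{n,j}(z)\,S_{n,j}(z)$ and has leading coefficient $\Gamma_{j+1}$, one has $\log|P_{n,j+1}(z)|=\log|P_{n,j}(z)|+\log|S_{n,j}(z)|-\log\Gamma_{j+1}$ away from the roots, so that
\[
\frac{1}{n-m}\log|P_{n,m}(z)|-\frac1n\log|P_{n,0}(z)|=\Big(\tfrac1{n-m}-\tfrac1n\Big)\log|P_{n,0}(z)|+\frac1{n-m}\sum_{j=0}^{m-1}\big(\log|S_{n,j}(z)|-\log\Gamma_{j+1}\big).
\]
For $\phi\in C_c^\infty(\mathbb C)$, using $\tfrac1{2\pi}\Delta\log|z-a|=\delta_a$, $\int\Delta\phi=0$, and $\int\Delta\phi\,\log|P_{n,0}|=2\pi n\int\phi\,d\mu_{n,0}$, multiplying the identity by $\tfrac1{2\pi}\Delta\phi$ and integrating gives
\[
\int\phi\,d\mu_{n,m}-\int\phi\,d\mu_{n,0}=\frac{m}{n-m}\int\phi\,d\mu_{n,0}+\frac1{2\pi(n-m)}\sum_{j=0}^{m-1}\int_{\mathbb C}\Delta\phi(z)\,\log|S_{n,j}(z)|\,dz,
\]
the constants $\log\Gamma_{j+1}$ dropping out because $\int\Delta\phi=0$. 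As $\mu_{n,0}\to\mu$ a.s.\ and $m=o(n)$, the first term tends to $0$; so it suffices to prove that, almost surely, for every $\phi\in C_c^\infty(\mathbb C)$ the last sum tends to $0$, and for that it is enough to show that a.s., for Lebesgue-a.e.\ $z$, $\bigl|\log|S_{n,j}(z)|\bigr|=O(\log n)$ for all $j\le m(n)-1$ eventually (then the sum is $O(m\log n/n)\to0$ by the hypothesis on $m(n)$). The claimed weak convergence $\mu_{n,m(n)}\to\mu$ then follows from this vague convergence together with the fact that all the measures, and $\mu$, have total mass $1$.

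For the upper estimate, from $|S_{n,j}(z)|\le (n-j)\max_k\gamma_k^{(n,j+1)}\big/\min_k|z-Z_k^{(n,j)}|$ one gets
\[
\log^+|S_{n,j}(z)|\le \log n+\max_k\log^+\gamma_k^{(n,j+1)}+\log^+\!\Big(1/\min_k|z-Z_k^{(n,j)}|\Big).
\]
The maximum of the $\le n^2$ Gamma variables occurring is $O(\log n)$ a.s.\ eventually by the Gamma tail bound and Borel--Cantelli. For the last term, for any bounded disk $D$ one has $\int_D\mathbb{P}\big(\min_k|z-Z_k^{(n,j)}|<n^{-C}\big)\,dz\le (n-j)\pi n^{-2C}$, so for $C>2$, summing over $j\le m(n)-1$ and over $n$ and applying Tonelli's theorem and Borel--Cantelli shows that a.s., for a.e.\ $z$, $\min_k|z-Z_k^{(n,j)}|\ge n^{-C}$ for all $j$ eventually. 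Hence $\log^+|S_{n,j}(z)|=O(\log n)$, a.s., for a.e.\ $z$, uniformly in $j\le m(n)-1$, eventually.

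The lower estimate is the main obstacle and the point where anti-concentration and the hypothesis on $\min_k|Z_k^{(n,j)}|$ enter. Fix $z$ in a bounded disk $D$, let $k_0=k_0(n,j)$ be an index minimizing $|Z_k^{(n,j)}|$, and write $S_{n,j}(z)=a_{k_0}\gamma_{k_0}^{(n,j+1)}+R$ with $a_{k_0}=1/(z-Z_{k_0}^{(n,j)})$. Conditionally on $\sigma(Z^{(n,0)},\gamma^{(n,1)},\dots,\gamma^{(n,j)})$ and on $(\gamma_k^{(n,j+1)})_{k\ne k_0}$ — with respect to which $k_0$, $a_{k_0}$ and $R$ are measurable — the only remaining randomness in $S_{n,j}(z)$ is the $\mathrm{Gamma}(\beta/2)$ variable $\gamma_{k_0}^{(n,j+1)}$, independent of this $\sigma$-field. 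On the event $\{\min_k|Z_k^{(n,j)}|\le n^{B+4}\}$ one has $|a_{k_0}|\ge\tfrac12 n^{-B-4}$ for $z\in D$ and $n$ large, and since $t\mapsto a_{k_0}t+R$ parametrizes a line traversed at speed $|a_{k_0}|$, the density of the $\mathrm{Gamma}(\beta/2)$ law (bounded when $\beta\ge2$, of order $x^{\beta/2-1}$ near $0$ otherwise) yields the anti-concentration bound $\mathbb{P}(|S_{n,j}(z)|\le\delta\mid\cdots)\le C(\delta/|a_{k_0}|)^{\min(1,\beta/2)}$, the quantitative form of Proposition~\ref{prop1}. Combining this with Markov's inequality $\mathbb{P}(\min_k|Z_k^{(n,j)}|>n^{B+4})\le Cn^{-4}$ (the only use of the moment hypothesis), and choosing $\delta=n^{-C'}$ with $C'$ large depending on $B$ and $\beta$, we obtain $\mathbb{P}(|S_{n,j}(z)|\le n^{-C'})\le Cn^{-4}$ uniformly over $z\in D$ and $j$. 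Then $\int_D\mathbb{P}(|S_{n,j}(z)|\le n^{-C'})\,dz\le C|D|n^{-4}$; summing over the $\le n$ values of $j$ and over $n$ and using Tonelli and Borel--Cantelli shows that a.s., for a.e.\ $z$, $|S_{n,j}(z)|\ge n^{-C'}$ for all $j\le m(n)-1$ eventually. Hence $\log^-|S_{n,j}(z)|=O(\log n)$, a.s., for a.e.\ $z$, uniformly in $j$, eventually.

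Combining the two estimates gives $\bigl|\log|S_{n,j}(z)|\bigr|=O(\log n)$ a.s., for a.e.\ $z$, uniformly in $j\le m(n)-1$, eventually, so for $\phi\in C_c^\infty(\mathbb C)$ with support in a disk $D$,
\[
\frac1{2\pi(n-m)}\sum_{j=0}^{m-1}\Big|\int\Delta\phi(z)\log|S_{n,j}(z)|\,dz\Big|\le \frac{\|\Delta\phi\|_\infty}{2\pi}\cdot\frac{m}{n-m}\cdot|D|\cdot O(\log n)=O\!\Big(\frac{m\log n}{n}\Big)\longrightarrow0,
\]
whence $\int\phi\,d\mu_{n,m(n)}\to\int\phi\,d\mu$ almost surely for every $\phi\in C_c^\infty(\mathbb C)$, and by conservation of total mass $\mu_{n,m(n)}\to\mu$ weakly almost surely. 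The delicate point throughout is that each exceptional event must survive a union bound over the $\Theta(n^2)$ pairs $(n,j)$ combined with an integration in $z$, which forces every probability estimate — above all the anti-concentration one — to decay polynomially in $n$ with a fixed margin; supplying such a bound at every level $j$, when the intermediate configurations are no longer i.i.d.\ samples of $\mu$, is precisely what the hypothesis $\sup_n n^{-B}\sup_{0\le j\le m(n)-1}\mathbb{E}[\min_k|Z_k^{(n,j)}|]<\infty$ provides, while $m(n)=o(n/\log n)$ is exactly what makes the accumulated $O(m\log n/n)$ error vanish.
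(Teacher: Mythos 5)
Your telescoping identity, the barycentre-of-the-smallest-modulus-root conditioning, the Markov bound from the moment hypothesis, and the Borel--Cantelli/Tonelli bookkeeping are all sound and in the same spirit as the paper: the paper proves Proposition~\ref{prop1} once (with a bound in terms of $\mathbb{E}[\min_k |Z_k^{(n)}|]$), then proves Theorem~\ref{thm:a.e.converg_n_logn_derivative_prop3} by splitting $|\int\varphi\,d\mu_{n,m(n)}-\int\varphi\,d\mu_{n,0}|$ into $m(n)$ one-step increments, applying Proposition~\ref{prop1} to each, and summing the resulting $O(n^{-A}n^{B})$ probabilities over $j$ and $n$ with $A=B+3$. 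Your argument essentially inlines a re-proof of Proposition~\ref{prop1}.

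However, there is a genuine gap at the step where you pass from the pointwise bound to the integral bound. You establish: a.s., for Lebesgue-a.e.\ $z$, there is a (random, $z$-dependent) threshold $N(z,\omega)$ such that $\bigl|\log|S_{n,j}(z)|\bigr|\le K\log n$ for all $j\le m(n)-1$ and all $n\ge N(z,\omega)$. You then conclude
\[
\Bigl|\int_{D}\Delta\phi(z)\log|S_{n,j}(z)|\,dz\Bigr|\le\|\Delta\phi\|_\infty\,|D|\cdot K\log n
\]
for $n$ large. This does not follow: the threshold $N(z,\omega)$ is not uniform in $z$, so for each fixed $n$ there is a random exceptional set $E_n=\{z\in D:N(z,\omega)>n\}$ whose measure tends to $0$ but is not quantified, and on $E_n$ the integrand $\log|S_{n,j}|$ is unbounded (indeed $\log_-|S_{n,j}|$ blows up at the roughly $n$ zeros of $S_{n,j}$ in $D$). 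To control $\int_{E_n}|\log|S_{n,j}||$ you would need a uniform-integrability or rearrangement estimate with an explicit rate on $|E_n|$, which your argument does not supply. The paper sidesteps this entirely: in the proof of Proposition~\ref{prop1} the integral $\int_{\mathbb D_R}(\log_-\cdots-K_4\log n)_+\,dm(z)$ is bounded \emph{in expectation} (by computing $\int_{\mathbb D_R}\mathbb{E}[\ldots]\,dm(z)$) and then Markov's inequality converts this directly into a tail bound for the integral; no $z$-pointwise eventuality is ever invoked. To repair your proof, replace the pointwise-a.e.\ claim with the expectation-of-the-integral bound and Markov's inequality, which recovers precisely the statement of Proposition~\ref{prop1} for the pair $(\mu_{n,j},\mu_{n,j+1})$, and then conclude as in the paper.

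A small additional remark: your union bound ``the maximum of the $\le n^2$ Gamma variables is $O(\log n)$ a.s.\ eventually'' is fine, but with $O(\log n)$ the implicit constant must be taken of order $1/\min(1,\beta/2)$ times a margin, matching the paper's exponent $n^{(A+1)/c}$ rather than a literal $O(\log n)$ cutoff; the paper works with a polynomial cutoff $n^{(A+1)/c}$ precisely to keep the later area estimate for the $\log_+$ level sets uniform in the conditioning event.
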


The last condition of Theorem \ref{thm:a.e.converg_n_logn_derivative_prop3}
does not look very easy to check. A sufficient condition is given as follows: 
\begin{lemma}\label{lem:prop4}
If for some $B > 0$
$$ \sup_{n \geq 2} n^{-B}   \mathbb{E} \left[ \max_{1 \leq k \leq n} |Z_k^{(n,0)}| \right] < \infty, 
$$
then last condition of Theorem \ref{thm:a.e.converg_n_logn_derivative_prop3}
is satisfied. 
 
 \end{lemma}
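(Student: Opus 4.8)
The plan is to bypass the probabilistic content almost entirely and reduce the claim to a deterministic, Gauss--Lucas type containment property of randomized derivatives. The key point is that, since the weights $\gamma_k^{(n,j)}$ have a Gamma distribution, they are almost surely strictly positive, and for strictly positive weights the zeros of a randomized derivative stay inside the convex hull of the base points. Concretely, I would first prove the following elementary fact: for any $w_1,\dots,w_N\in\mathbb{C}$ and any reals $\gamma_1,\dots,\gamma_N>0$, every zero of $R(z):=\sum_{k=1}^N\gamma_k\prod_{\ell\neq k}(z-w_\ell)$ lies in $\mathrm{conv}\{w_1,\dots,w_N\}$. If a zero $z_0$ of $R$ coincides with some $w_\ell$ this is trivial; otherwise $R(z_0)=0$ is equivalent to $\sum_k\gamma_k/(z_0-w_k)=0$, and rationalizing each summand and taking complex conjugates yields $z_0=\sum_k\lambda_k w_k$ with $\lambda_k:=\frac{\gamma_k|z_0-w_k|^{-2}}{\sum_j\gamma_j|z_0-w_j|^{-2}}\geq 0$ and $\sum_k\lambda_k=1$, a bona fide convex combination (the denominator does not vanish because the $\gamma_j$ are positive).

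Next I would iterate this. Since $(Z_k^{(n,j)})_{1\leq k\leq n-j}$ is by definition the zero set of $z\mapsto\sum_{k=1}^{n+1-j}\gamma_k^{(n,j)}\prod_{1\leq\ell\leq n+1-j,\,\ell\neq k}(z-Z_\ell^{(n,j-1)})$, the fact above gives, almost surely, $Z_k^{(n,j)}\in\mathrm{conv}\{Z_\ell^{(n,j-1)}:1\leq\ell\leq n+1-j\}$ for all $k$. A downward induction on $j$ then shows that all the points $Z_k^{(n,j)}$, for $0\leq j\leq m(n)$, lie in $K:=\mathrm{conv}\{Z_1^{(n,0)},\dots,Z_n^{(n,0)}\}$ almost surely. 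Because $z\mapsto|z|$ is convex, its maximum over the compact convex polygon $K$ is attained at a vertex, so $\max_{1\leq k\leq n-j}|Z_k^{(n,j)}|\leq\max_{1\leq\ell\leq n}|Z_\ell^{(n,0)}|$, and in particular $\min_{1\leq k\leq n-j}|Z_k^{(n,j)}|\leq\max_{1\leq\ell\leq n}|Z_\ell^{(n,0)}|$ almost surely, uniformly in $0\leq j\leq m(n)-1$.

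Finally, taking expectations and invoking the hypothesis of the lemma with its constant $B$, I obtain $\sup_{0\leq j\leq m(n)-1}\mathbb{E}\big[\min_{1\leq k\leq n-j}|Z_k^{(n,j)}|\big]\leq\mathbb{E}\big[\max_{1\leq\ell\leq n}|Z_\ell^{(n,0)}|\big]$, so that multiplying by $n^{-B}$ and taking the supremum over $n\geq 2$ yields exactly the last condition of Theorem \ref{thm:a.e.converg_n_logn_derivative_prop3} (with the same $B$). I do not expect any genuine obstacle here; the only points deserving a sentence of care are that the $\gamma$'s are almost surely positive — so the convex-combination identity is legitimate and no denominator vanishes — and that degenerate configurations (repeated base points, or a hull collapsing to a segment or a point) are harmless, since the convexity argument above is insensitive to them.
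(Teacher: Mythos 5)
Your proposal is correct and takes essentially the same approach as the paper: both use the identity $\sum_k \gamma_k/(z_0 - w_k) = 0$, rationalize by conjugates to exhibit each $Z_k^{(n,j)}$ as a convex combination (barycenter) of the $Z_\ell^{(n,j-1)}$, iterate to bound $\max_k |Z_k^{(n,j)}|$ by $\max_\ell |Z_\ell^{(n,0)}|$, bound $\min$ by $\max$, and take expectations. The only cosmetic difference is that you phrase the containment in Gauss--Lucas/convex-hull language and explicitly dispose of the degenerate case $z_0 = w_\ell$, whereas the paper writes out the barycentric identity directly.
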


\begin{theorem}\label{thm:a.e.converg_n_logn_derivative_prop5}
Let $\mu$ be a (deterministic) probability measure on $\mathbb{C}$ with at least polynomially decaying tail, i.e.  there exists $c > 0$ with 
$$\sup_{R \geq 1} R^c \mu( \{z \in \mathbb{C}, 
|z| > R \}) < \infty. $$
Let $(Z^{(n,0)}_k)_{1 \leq k \leq n}$ be i.i.d. random variables with distribution $\mu$. Then, for $1 \leq m(n) \leq n-1$ such that 
$$\frac{m(n)}{n/\log n} \underset{n \rightarrow \infty}{\longrightarrow} 0,$$
$(\mu_{n,  m(n)})_{n \geq 2}$ converges almost surely to $\mu$. 

 \end{theorem}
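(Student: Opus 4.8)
The plan is to obtain Theorem~\ref{thm:a.e.converg_n_logn_derivative_prop5} from Theorem~\ref{thm:a.e.converg_n_logn_derivative_prop3} by checking its three hypotheses; a case split on the tail exponent is needed for the last one. The first two are immediate. Since $(Z^{(n,0)}_k)_{1 \le k \le n}$ are i.i.d.\ with law $\mu$, the almost sure convergence $\mu_{n,0} \to \mu$ is routine: for bounded continuous $f$ the function $(z_1,\dots,z_n)\mapsto\frac{1}{n}\sum_k f(z_k)$ changes by at most $2\|f\|_\infty/n$ under a single-coordinate change, so McDiarmid's inequality gives exponential concentration of $\int f\,d\mu_{n,0}$ about $\int f\,d\mu$; Borel--Cantelli, an intersection over a countable convergence-determining family of test functions, and the same estimate applied to indicators $\mathbf{1}_{\{|z|>R\}}$ (for tightness) give $\mu_{n,0}\to\mu$ a.s. The growth condition $m(n)/(n/\log n)\to 0$ is assumed.

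The third hypothesis, a polynomial bound on $\mathbb{E}[\min_{1\le k\le n-j}|Z^{(n,j)}_k|]$, is the crux. I would first record a positive-weight Gauss--Lucas statement: for $\gamma_1,\dots,\gamma_N>0$, any zero $z_0$ of $z\mapsto\sum_{k=1}^N\gamma_k\prod_{\ell\ne k}(z-w_\ell)$ with $z_0\notin\{w_k\}$ is a convex combination of $w_1,\dots,w_N$ with weights proportional to $\gamma_k/|z_0-w_k|^2$ (and $z_0\in\{w_k\}$ forces a repeated point), so every zero lies in $\operatorname{conv}\{w_k\}$; since the Gamma variables are a.s.\ positive, iterating shows all $Z^{(n,j)}_k$ lie in $\operatorname{conv}\{Z^{(n,0)}_k\}_k$, whence $\min_{1\le k\le n-j}|Z^{(n,j)}_k|\le M_n:=\max_{1\le k\le n}|Z^{(n,0)}_k|$ for every $j$. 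If the tail bound holds with some $c>1$, integrating $\mathbb{P}(M_n>R)\le n\min(1,CR^{-c})$ gives $\mathbb{E}[M_n]=O(n^{1/c})=O(n)$, so $\sup_n n^{-1}\mathbb{E}[\max_k|Z^{(n,0)}_k|]<\infty$ and Lemma~\ref{lem:prop4} supplies the third hypothesis; Theorem~\ref{thm:a.e.converg_n_logn_derivative_prop3} then finishes this case.

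When only $0<c\le 1$ is available $\mathbb{E}[M_n]$ may be infinite, so I would instead reduce to compactly supported $\mu$. Let $\pi_L$ be the radial retraction of $\mathbb{C}$ onto $\{|z|\le L\}$, put $\mu_L:=(\pi_L)_*\mu$, which is compactly supported with $\mu_L\Rightarrow\mu$ as $L\to\infty$, and set $\widetilde Z^{(n,0)}_k:=\pi_L(Z^{(n,0)}_k)$; then the $\widetilde Z^{(n,0)}_k$ are i.i.d.\ with law $\mu_L$ and agree with $Z^{(n,0)}_k$ outside $S_n:=\{k:|Z^{(n,0)}_k|>L\}$, where $|S_n|/n\to\mu(\{|z|>L\})\le CL^{-c}$ a.s. For the compactly supported law $\mu_L$ all iterates stay in $\{|z|\le L\}$ by Gauss--Lucas, so the third hypothesis holds with $B=0$ and Theorem~\ref{thm:a.e.converg_n_logn_derivative_prop3} gives that the $m(n)$-th iterated randomized derivative built from the $\widetilde Z^{(n,0)}_k$, using the \emph{same} Gamma variables, has empirical measure $\widetilde\mu_{n,m(n)}\to\mu_L$ a.s. It then suffices to prove a stability estimate $d_{\mathrm{BL}}(\mu_{n,m(n)},\widetilde\mu_{n,m(n)})\le\Psi(|S_n|/n)$ with $\Psi(\varepsilon)\to 0$ as $\varepsilon\to 0$, uniformly in $n$ over the range $m(n)=o(n/\log n)$; granting this, a diagonal argument over $L\in\mathbb{N}$ (using $d_{\mathrm{BL}}(\mu_L,\mu)\to 0$) gives $\mu_{n,m(n)}\to\mu$ a.s.

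I expect the stability estimate to be the main obstacle: since $\sum_k\gamma_k\prod_{\ell\ne k}(z-w_\ell)$ is not a product over the $w_\ell$, changing a few of the $w_\ell$ can a priori displace all $N-1$ of its zeros. My plan for one step is to note that replacing $w_1$ by $w_1'$ changes the randomized derivative $R$ into $R+(w_1'-w_1)\widetilde R$, where $\widetilde R$ is the randomized derivative of the remaining points (of degree one less), and then to use a Rouch\'e/argument-principle comparison to show that all but $O(1)$ of the zeros move by $o(1)$; this makes the map (empirical measure of the $w_\ell$)$\,\mapsto\,$(empirical measure of the zeros of the randomized derivative), carried out with fixed Gamma weights, $d_{\mathrm{BL}}$-Lipschitz with constant $1+O(1/n)$. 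Iterating over the $m(n)$ differentiation steps compounds this to $(1+O(1/n))^{m(n)}=O(1)$, and summing the contributions of the $|S_n|$ modified coordinates gives $d_{\mathrm{BL}}(\mu_{n,m(n)},\widetilde\mu_{n,m(n)})=O(|S_n|/n)$, which tends to $0$ with $L$; making this quantitative and uniform — in particular identifying the exceptional zeros — is where the real work lies. An alternative route avoiding truncation would be to bound $\mathbb{E}[\min_{1\le k\le n-j}|Z^{(n,j)}_k|]$ directly through a Walsh-type count: if all zeros of a randomized derivative avoid a disk then all but $O(j)$ of its generating points do too, an event of probability (for i.i.d.\ heavy-tailed points) small enough to keep the associated tail integral polynomially bounded.
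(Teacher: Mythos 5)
Your argument is complete for $c > 1$, and that branch essentially reproduces the paper's strategy (Lemma~\ref{lem:prop4} plus Theorem~\ref{thm:a.e.converg_n_logn_derivative_prop3}), but the $0 < c \le 1$ branch has a genuine gap. You are right that $\mathbb{E}[\max_k |Z_k^{(n,0)}|]$ can be infinite when $c \le 1$, so Lemma~\ref{lem:prop4} cannot be applied unconditionally. However, the truncation-plus-stability route you propose is not carried out, and as you acknowledge, it is the crux. The one-step Rouch\'e comparison is already delicate: replacing $w_1$ by $w_1'=\pi_L(w_1)$ with $|w_1|$ large makes $|w_1'-w_1|$ unbounded, so the perturbation $(w_1'-w_1)\widetilde R$ is not small, and the claim that all but $O(1)$ zeros move by $o(1)$ would need a serious argument. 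Compounding such an estimate over $m(n)=o(n/\log n)$ iterations, while also controlling which zeros are ``exceptional'' at each step, is not obviously feasible. As written, your proposal does not close the case $c \le 1$.

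The paper handles all $c>0$ at once with a move you did not take: condition on $(Z^{(n,0)}_k)_{1\le k\le n}$ \emph{before} invoking Theorem~\ref{thm:a.e.converg_n_logn_derivative_prop3}. In the conditioned world the expectation in the third hypothesis is taken only over the Gamma variables, and by the convex-hull observation (the one underlying Lemma~\ref{lem:prop4}, which you also state correctly) it is bounded by the now-deterministic quantity $\max_{1\le k\le n}|Z_k^{(n,0)}|$. Thus the condition reduces to: almost surely, $\sup_n n^{-B}\max_{1\le k\le n}|Z_k^{(n,0)}|<\infty$ for some $B$. A union bound gives $\mathbb{P}\bigl(\max_k |Z_k^{(n,0)}| > n^B\bigr) \le n\,\mathbb{P}(|Z|>n^B) = O(n^{1-Bc})$, and choosing $B \ge 3/c$ makes this summable so Borel--Cantelli applies. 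Since $B$ is free, this works for every $c>0$; no truncation, no stability estimate, no case split on $c$. Your $c>1$ branch is then a correct but unnecessarily restrictive special case, and the $c\le 1$ branch becomes trivial once you condition first. (Also, your McDiarmid argument for $\mu_{n,0}\to\mu$ a.s.\ is fine but heavier than needed; the strong law of large numbers applied to a countable family of smooth compactly supported test functions suffices, as the paper notes.)
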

The following figures illustrate for the distribution of the roots of $P$ and its randomized derivatives with $100$ initial points.
\begin{figure}[H] 
  \centering
\includegraphics[width=0.3455\linewidth]{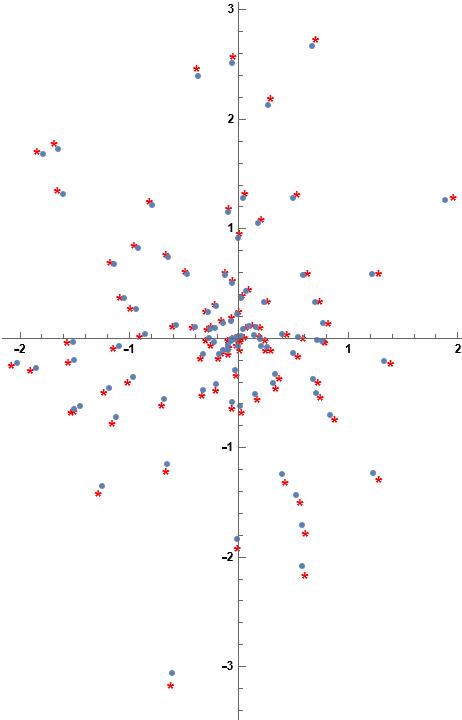} 
    \label{fig:0_3der} \caption{The behavior of the roots of $P_n$ and the third randomized derivative polynomials.}   
    \end{figure}
\begin{figure}[H]
  \centering
\includegraphics[width=0.346\linewidth]{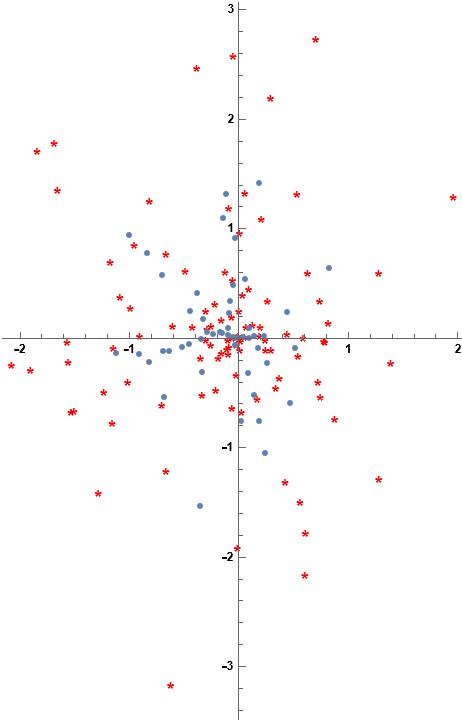}
   \label{fig:0_50der}
   
\caption{The behavior of the roots of $P_n$ and the $50$th randomized derivative polynomials.}
\end{figure}
From the above figure, we can see that the behavior of the roots of $P_n$ and $k$-th randomized derivative polynomials are similar for $k$ small (Figure $1$) and very different for $k$ large (Figure $2$).

  \begin{remark}
We can ask what happens if we consider the limiting cases $\beta =0$ and $\beta =\infty$.  The case $\beta=\infty$ corresponds to the successive derivatives of a given polynomial: this case is still open under the assumption $m(n) = o(n/\log n)$ (see \cite{michelen2022zeros},\cite{michelen2023almost}).  The case $\beta = 0$ consists, at each step, of choosing one of the points uniformly at random, and removing it. If we do $o(n)$ iterations, the limiting empirical measure is not changed by this procedure: our result remains valid. 
  \end{remark}
Next, we define the circular variant of the "randomized derivative" of $P_n$, given, with the same notation as at the beginning, by 
$$V_{n}(z) = P_{n}(z)  \sum_{k=1}^n  \frac{Z^{(n)}_k + z}{Z^{(n)}_k - z} \gamma^{(n)}_k. $$
For $\beta=\infty$, $V_n(z)$ is proportional to $ zP_n'(z)-\frac{n}{2}P_n(z)$ which corresponds to the operator studied by Kabluchko in \cite{kabluchko2021repeated} (see Subsection \ref{ssec:con}). For $n \geq 2$, let $\tilde\nu_n$ be the empirical 
measure of the zeros of $V_n$, counted with multiplicity. We can iterate the construction, similarly as in the case where higher order derivatives are considered. 
We fix $\beta > 0$, and for $n \geq 2$, we choose an integer $m(n) \geq 1$, representing the number of iterations which are performed. We fix a random set of points $(Z_k^{(n,0)})_{1 \leq k \leq n}$
and an independent set of i.i.d. random variables $(\gamma_k^{(n,j)})_{1 \leq j \leq m(n), 1 \leq k \leq n}$, with Gamma distribution of parameter $\beta/2$. Then, for $1 \leq j \leq m(n)$, we inductively define the set of points $(Z_k^{(n,j)})_{1 \leq k \leq n}$, as the set of zeros, counted with multiplicity, of the polynomial 
$$z \mapsto \sum_{k=1}^{n} \gamma^{(n,j)}_k (Z_k^{(n,j-1)}+z)  \prod_{1 \leq \ell \leq n, \ell \neq k} (z - Z_{\ell}^{(n,j-1)}).$$

We observe that the coefficients $(\gamma^{(n,j)}_k)_{1 \leq k \leq n}$ are independent of 
$ (Z_{\ell}^{(n,j-1)})_{1 \leq \ell \leq n}$, and then we are in the same situation as above when we go
from the points $ (Z_{\ell}^{(n,j-1)})_{1 \leq \ell \leq n}$ to the points $(Z_k^{(n,j)})_{1 \leq k \leq n}$. We can then apply Proposition \ref{prop6} below.

For $n \geq 2$ and $0 \leq j \leq m(n)$, let
$$\tilde\mu_{n,j} := \frac{1}{n} \sum_{k=1}^{n} \delta_{Z_k^{(n,j)}}.$$

Similarly as for the randomized derivatives model, we get the following results for its circular variant. 
\begin{theorem}\label{thm:prop7}
If $(\mu_n)_{n \geq 1}$ converges almost surely to a deterministic measure $\mu$ on $\mathbb{C}$, then 
$(\tilde\nu_n)_{n \geq 2}$ converges almost surely to $\mu$. 
\end{theorem}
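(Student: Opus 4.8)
The plan is to adapt to $V_n$ the logarithmic-potential argument already used for Theorem~\ref{thm:a.e.converg1stderivative}, with the rational factor attached to $P_n$ now being the circular one. Write $\Gamma_n:=\sum_{k=1}^n\gamma_k^{(n)}>0$ (almost surely) and
$$S_n(z):=\sum_{k=1}^n\gamma_k^{(n)}\,\frac{Z_k^{(n)}+z}{Z_k^{(n)}-z},$$
so that $V_n=P_n\cdot S_n$ (a polynomial times a rational function) is a genuine polynomial of degree $n$ with leading coefficient $-\Gamma_n$, and for $z\notin\{Z_1^{(n)},\dots,Z_n^{(n)}\}$, writing $U_{\mu_n}(z):=\int_{\mathbb C}\log|z-w|\,d\mu_n(w)=\frac1n\log|P_n(z)|$,
$$\frac1n\log|V_n(z)|=U_{\mu_n}(z)+\frac1n\log|S_n(z)|.$$
Since $\Gamma_n$ is Gamma distributed with shape $n\beta/2$, both of its tails are super-exponentially small, so by Borel--Cantelli $e^{-n}\le\Gamma_n\le\beta n$ for all large $n$ almost surely; hence $\frac1n\log\Gamma_n\to0$ and the logarithmic potential of $\tilde\nu_n$ equals $U_{\mu_n}(z)+\frac1n\log|S_n(z)|+o(1)$. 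It will therefore suffice to prove that, almost surely, $\frac1n\log|S_n(z)|\to0$ for Lebesgue-almost every $z$, and that this holds in $L^1_{\mathrm{loc}}(\mathbb C)$; then the potentials of $\tilde\nu_n$ and $\mu_n$ differ by $o(1)$ in $L^1_{\mathrm{loc}}$, so (taking the distributional Laplacian and using tightness of $(\tilde\nu_n)$, which follows from tightness of $(\mu_n)$ exactly as in Theorem~\ref{thm:a.e.converg1stderivative}) $\tilde\nu_n-\mu_n\to0$ as measures, and the hypothesis $\mu_n\to\mu$ gives $\tilde\nu_n\to\mu$.

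For the \emph{upper bound} I would use the elementary inequality $|Z_k^{(n)}+z|\le|Z_k^{(n)}-z|+2|z|$, which gives
$$|S_n(z)|\le\Big(1+\frac{2|z|}{\delta_n(z)}\Big)\Gamma_n,\qquad\delta_n(z):=\min_{1\le k\le n}|z-Z_k^{(n)}|.$$
The random set $\{z:\delta_n(z)\le n^{-2}\}$ is contained in a union of $n$ disks of radius $n^{-2}$, hence has Lebesgue measure at most $\pi n^{-3}$; by Fubini and Borel--Cantelli, for Lebesgue-almost every $z$ one has, almost surely, $\delta_n(z)>n^{-2}$ (and, applying this at $-z$ as well, $\delta_n(-z)>n^{-2}$) for all large $n$. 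Combined with $\Gamma_n\le\beta n$ this gives $\frac1n\log|S_n(z)|\le(3\log n+O(1))/n\to0$ for a.e.\ $z$, almost surely.

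For the \emph{lower bound} --- the crux --- I would condition on $\mathcal F_n:=\sigma(Z_1^{(n)},\dots,Z_n^{(n)})$, so that $S_n(z)=\sum_{k=1}^n c_k^{(n)}\gamma_k^{(n)}$ with $c_k^{(n)}:=(Z_k^{(n)}+z)/(Z_k^{(n)}-z)$ being $\mathcal F_n$-measurable and the $\gamma_k^{(n)}$ i.i.d.\ Gamma$(\beta/2)$, independent of $\mathcal F_n$. This is precisely the situation handled by Proposition~\ref{prop6}, which supplies the needed anti-concentration; in the form we need it gives a bound like
$$\mathbb P\big(|S_n(z)|\le t\,M_n(z)\,\big|\,\mathcal F_n\big)\le C_\beta\,t^{\alpha},\qquad 0<t\le1,\quad\alpha=\alpha(\beta)>0,$$
where $M_n(z):=\max_{1\le k\le n}|c_k^{(n)}|$ (as in the proof of Proposition~\ref{prop1}, one may reduce to the real or imaginary part on which the largest coefficient has modulus at least $M_n(z)/\sqrt2$ and use that the Gamma$(\beta/2)$ density is integrable near the origin). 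Taking $t=e^{-\varepsilon n}$ makes the bound summable, so by Borel--Cantelli, for fixed $z$, almost surely $\frac1n\log|S_n(z)|\ge-\varepsilon+\frac1n\log M_n(z)$ for all large $n$. Finally $\frac1n\log M_n(z)\to0$ for a.e.\ $z$, a.s.: the bound $M_n(z)\le1+2|z|/\delta_n(z)$ controls the upper side, while $\log M_n(z)\ge\frac1n\sum_k\log|c_k^{(n)}|=U_{\mu_n}(-z)-U_{\mu_n}(z)$ together with $|U_{\mu_n}(-z)-U_{\mu_n}(z)|\le\log\!\big(1+2|z|/\min(\delta_n(z),\delta_n(-z))\big)=O(\log n)$ (from the previous paragraph) controls the lower side. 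Hence $\liminf_n\frac1n\log|S_n(z)|\ge-\varepsilon$, and letting $\varepsilon\downarrow0$ along a sequence, $\liminf\ge0$; with the upper bound, $\frac1n\log|S_n(z)|\to0$ for a.e.\ $z$, almost surely.

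The genuine obstacle is the anti-concentration input, i.e.\ ruling out that $|S_n(z)|$ is super-exponentially small --- this is exactly Proposition~\ref{prop6}; once it is granted, everything else is either elementary geometry of the configuration $(Z_k^{(n)})$ or a rerun of the soft potential-theoretic machinery set up for Theorem~\ref{thm:a.e.converg1stderivative}. The only other place requiring care is upgrading the pointwise bound on $\frac1n\log|S_n(z)|$ to an $L^1_{\mathrm{loc}}$ bound uniform in $n$ (needed to convert a.e.\ convergence of potentials into convergence of the measures): the lower side needs the anti-concentration in the integrated form $\sup_n\int_K\mathbb E\big[|S_n(z)|^{-s}\big]\,dz<\infty$ for some small $s>0$ and compact $K$, which is again provided by Proposition~\ref{prop6}, exactly as for Theorem~\ref{thm:a.e.converg1stderivative}.
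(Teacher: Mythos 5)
Your route is genuinely different from the paper's, which is a one-liner: the paper proves Theorem~\ref{thm:prop7} by literally repeating the proof of Theorem~\ref{thm:a.e.converg1stderivative} with Proposition~\ref{prop6} in place of Proposition~\ref{prop1} --- condition on the $Z$'s, observe that $\mu_n\to\mu$ forces $\min_k|Z_k^{(n)}|$ to be bounded, apply Proposition~\ref{prop6} with $A>1$, and conclude by Borel--Cantelli that $\int\varphi\,d\tilde\nu_n-\int\varphi\,d\mu_n\to0$ a.s.\ for a countable family of test functions. You instead unpack everything at the pointwise level: write the potential of $\tilde\nu_n$ as $U_{\mu_n}+\tfrac1n\log|S_n|+o(1)$, prove $\tfrac1n\log|S_n(z)|\to0$ for a.e.\ $z$ a.s.\ via elementary geometry for the upper bound and an anti-concentration estimate for the lower bound, and then try to upgrade to $L^1_{\mathrm{loc}}$ convergence of potentials. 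Both routes have the same heart (anti-concentration for $S_n$), but yours re-derives the internal content of Proposition~\ref{prop6} rather than invoking the proposition itself; the paper's route is shorter because Proposition~\ref{prop6} already packages the integral estimate $\bigl|\int\varphi\,d\tilde\nu_n-\int\varphi\,d\mu_n\bigr|\le K\log n/n$ with a summable failure probability, so no passage through pointwise-a.e.-plus-$L^1_{\mathrm{loc}}$ is needed.

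Two concrete concerns. First, you repeatedly attribute to Proposition~\ref{prop6} a conditional anti-concentration bound of the form $\mathbb P\bigl(|S_n(z)|\le t\,M_n(z)\,\big|\,\mathcal F_n\bigr)\le C_\beta t^\alpha$; but Proposition~\ref{prop6} is stated in terms of $\int\varphi\,d\tilde\nu_n-\int\varphi\,d\mu_n$, not in terms of $S_n(z)$ pointwise. What you actually use is a variant of the argument \emph{inside} the proof of Proposition~\ref{prop6} (conditioning on all $\gamma_j$ but one and exploiting H\"older regularity of the Gamma density). Your choice to single out the index of the largest $|c_k^{(n)}|$ rather than, as in the paper, the index of the smallest $|Z_k^{(n)}|$ is a valid and equivalent variant; it should just not be presented as a quotation of the stated proposition. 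Second, the $L^1_{\mathrm{loc}}$ upgrade as written has a gap. You propose the criterion $\sup_n\int_K\mathbb E\bigl[|S_n(z)|^{-s}\bigr]\,dm(z)<\infty$, but a bound that is merely uniform in $n$ only yields, via Markov, $\mathbb P\bigl(\int_K|S_n|^{-s}\,dm>M\bigr)\le C/M$, which is not summable in $n$ and therefore cannot be fed into Borel--Cantelli to give an almost sure domination of $\tfrac1n\int_K\log_-|S_n|\,dm$. What is actually needed is the $n$-decaying estimate
$$\mathbb E\!\left[\int_{\mathbb{D}_R}\Bigl(\log_-|S_n(z)|-K_4\log n\Bigr)_+dm(z)\right]=\mathcal{O}\bigl(n^{-A}(1+|Z_1^{(n)}|)\bigr),$$
which gives $\tfrac1n\int\log_-|S_n|=\mathcal{O}(\log n/n)$ on an event of probability $1-\mathcal{O}(n^{-A})$ and then Borel--Cantelli. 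This is precisely what the proof of Proposition~\ref{prop6} establishes; if you state and use it in this quantitative form (rather than the qualitative $\sup_n$ criterion), your argument closes. At that point, however, you have effectively reproduced the paper's proof of Proposition~\ref{prop6}, and the shorter path is to apply the proposition directly as the paper does.
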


\begin{theorem} \label{thm:prop8}
We assume that $\tilde\mu_{n,0}$ converges almost surely to a deterministic measure $\mu$ on $\mathbb{C}$, 
that 
$$\frac{m(n)}{n/\log n} \underset{n \rightarrow \infty}{\longrightarrow} 0$$
and that for some $B \geq 0$, 
$$ \sup_{n \geq 2} n^{-B} \sup_{ 0 \leq j \leq m(n)-1} \mathbb{E} \left[ \min_{1 \leq k \leq n} |Z_k^{(n,j)}| \right] < \infty. 
$$

Then, $(\tilde\mu_{n, m(n)})_{n \geq 2}$ converges almost surely to $\mu$. 
\end{theorem}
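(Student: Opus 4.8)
Since the coefficients $(\gamma^{(n,j)}_k)_k$ of the $j$-th iteration are independent of $(Z^{(n,j-1)}_\ell)_\ell$, each step is an instance of the situation treated in Theorem~\ref{thm:prop7}/Proposition~\ref{prop6}; the plan is therefore to prove Theorem~\ref{thm:prop8} by iterating that single-step estimate $m(n)$ times, exactly as Theorem~\ref{thm:a.e.converg_n_logn_derivative_prop3} is obtained by iterating the one-step estimate behind Theorem~\ref{thm:a.e.converg1stderivative}. Concretely, write $U_\rho(z):=\int_{\mathbb C}\log|z-w|\,d\rho(w)$ for the logarithmic potential, $\tilde P_{n,j}(z):=\prod_{k=1}^n(z-Z^{(n,j)}_k)$, $g_{n,j}(z):=\sum_{k=1}^n\gamma^{(n,j)}_k\frac{Z^{(n,j-1)}_k+z}{Z^{(n,j-1)}_k-z}$ and $\bar\gamma_{n,j}:=\sum_k\gamma^{(n,j)}_k$. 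The polynomial defining the $j$-th generation has the same zeros as $\tilde P_{n,j-1}(z)\,g_{n,j}(z)$ and leading coefficient of modulus $\bar\gamma_{n,j}$, so $\tilde P_{n,j}(z)=\tilde P_{n,j-1}(z)\,g_{n,j}(z)/c_{n,j}$ with $|c_{n,j}|=\bar\gamma_{n,j}$, and taking $\frac1n\log|\cdot|$,
$$U_{\tilde\mu_{n,j}}(z)=U_{\tilde\mu_{n,j-1}}(z)+\frac1n\log|g_{n,j}(z)|-\frac1n\log\bar\gamma_{n,j};$$
summing over $1\le j\le m(n)$ telescopes to $U_{\tilde\mu_{n,m(n)}}(z)=U_{\tilde\mu_{n,0}}(z)+\sum_{j=1}^{m(n)}\bigl(\tfrac1n\log|g_{n,j}(z)|-\tfrac1n\log\bar\gamma_{n,j}\bigr)$. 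Since $\tilde\mu_{n,0}\to\mu$ almost surely and the degree stays equal to $n$ at every generation, the theorem follows from (i) the usual passage from almost-sure pointwise (Lebesgue-a.e.\ $z$) convergence of potentials to almost-sure weak convergence of measures, which requires tightness of $(\tilde\mu_{n,m(n)})_n$ and uniform integrability of $w\mapsto\log|z-w|$, both supplied by the $n^{-B}$-moment hypothesis as in Lemma~\ref{lem:prop4}; and (ii) the claim that, for Lebesgue-a.e.\ fixed $z$, $\sum_{j=1}^{m(n)}\bigl(\tfrac1n\log|g_{n,j}(z)|-\tfrac1n\log\bar\gamma_{n,j}\bigr)\to0$ almost surely.

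For (ii) I would control three ingredients separately, in each case taking a union bound over $j\in\{1,\dots,m(n)\}$ and then summing in $n$ (Borel--Cantelli), the point being that $m(n)=o(n/\log n)$ gives $n/m(n)\gg\log n$, which is exactly the margin the estimates below leave. The normalization is immediate: $\bar\gamma_{n,j}\sim\Gamma(n\beta/2)$ has sub-exponential fluctuations around $n\beta/2$, so $\sum_j\tfrac1n\log\bar\gamma_{n,j}\to0$ a.s. The upper bound is soft: $g_{n,j}(z)/\bar\gamma_{n,j}=\sum_k\frac{\gamma^{(n,j)}_k}{\bar\gamma_{n,j}}\,\frac{Z^{(n,j-1)}_k+z}{Z^{(n,j-1)}_k-z}$ is a convex combination of the numbers $\frac{Z^{(n,j-1)}_k+z}{Z^{(n,j-1)}_k-z}$, hence $\log^+|g_{n,j}(z)/\bar\gamma_{n,j}|\le\log(1+2|z|)+\max_k\log^-|z-Z^{(n,j-1)}_k|$; the last term is $O(\log n)$ unless some zero at step $j-1$ is within $n^{-C}$ of $z$, an event whose contribution one discards by a Fubini argument in $(z,\omega)$ after noting $\mathbb E\,\mathrm{Leb}\bigl(\{z\in B_R:\min_k|z-Z^{(n,j-1)}_k|<n^{-C}\}\bigr)\le\pi n^{1-2C}$. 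The delicate ingredient is the lower bound on $|g_{n,j}(z)|$: conditionally on $\mathcal F_{j-1}:=\sigma(Z^{(n,i)}_k:i<j)$, $g_{n,j}(z)$ is a linear combination of i.i.d.\ $\Gamma(\beta/2)$ variables with fixed complex coefficients, and Proposition~\ref{prop6} yields a polynomial-in-$t$ upper bound for $\mathbb P(|g_{n,j}(z)|\le t\mid\mathcal F_{j-1})$ in terms of a non-degeneracy parameter of the coefficient vector; one then needs that this parameter is at least $n^{-O(1)}$ off an event whose probability is summable in $n$ even after the union bound over $j\le m(n)$, and this is precisely where the hypothesis $\sup_n n^{-B}\sup_{j\le m(n)-1}\mathbb E[\min_k|Z^{(n,j)}_k|]<\infty$ enters.

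The main obstacle is this last point: the per-step error $\tfrac1n\log|g_{n,j}(z)|$ is typically only $O(\log n/n)$, so the argument tolerates just $m(n)=o(n/\log n)$ steps, and all the exceptional probabilities—for $\bar\gamma_{n,j}$ far from $n\beta/2$, for a zero at step $j-1$ near $z$, and for the coefficient vector of $g_{n,j}$ being degenerate—must be made quantitative enough that, after the union bound over the $m(n)$ steps, the sum over $n$ converges and a single Borel--Cantelli application suffices. I expect the circular anti-concentration estimate (Proposition~\ref{prop6}) and its interaction with the $\min_k|Z^{(n,j)}_k|$ hypothesis to be the technically heaviest part; once that is in hand, the iteration and the potential-theoretic wrap-up proceed exactly as for Theorem~\ref{thm:a.e.converg_n_logn_derivative_prop3}, with the constant degree $n$ making the accounting a little lighter than in the randomized-derivative case.
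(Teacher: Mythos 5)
Your opening sentence correctly identifies the paper's strategy: the proof of Theorem~\ref{thm:prop8} is line-for-line the proof of Theorem~\ref{thm:a.e.converg_n_logn_derivative_prop3} with Proposition~\ref{prop6} substituted for Proposition~\ref{prop1} (and the degree staying $n$ rather than decreasing). Namely: union bound over $j\le m(n)$, observe that $\varepsilon/m(n)\ge K\log n/n$ eventually, apply Proposition~\ref{prop6} as a black box to each step, bound $1+\mathbb{E}[\min_k|Z_k^{(n,j-1)}|]\le C'n^B$ by hypothesis, sum to get $C'm(n)n^{B-A}\le C'n^{B-A+1}$, choose $A\ge B+3$, Borel--Cantelli, then conclude via a countable family of compactly supported smooth test functions (vague convergence of probability measures to a probability measure is weak convergence). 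However, your ``concretely'' elaboration departs from this plan and instead telescopes the logarithmic potentials pointwise, which is a genuinely different and more demanding route.

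That route has a gap: you assert that tightness of $(\tilde\mu_{n,m(n)})_n$ and uniform integrability of $w\mapsto\log|z-w|$ are ``both supplied by the $n^{-B}$-moment hypothesis as in Lemma~\ref{lem:prop4}.'' But the moment hypothesis of Theorem~\ref{thm:prop8} controls only $\mathbb{E}[\min_{1\le k\le n}|Z_k^{(n,j)}|]$ --- the \emph{smallest} modulus --- which says nothing about the largest modulus and hence gives neither tightness nor the required integrability of the logarithmic kernel. Lemma~\ref{lem:prop4} (and its circular analogue Lemma~\ref{lem:prop9}) go in the opposite direction: they derive a bound on the min from a bound on the max, not vice versa, and the max-bound is an additional sufficient condition, not part of Theorem~\ref{thm:prop8}'s hypotheses. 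The paper's test-function approach sidesteps this entirely, which is precisely why those hypotheses suffice. You also mischaracterize Proposition~\ref{prop6}: it is a statement about the integrated quantity $\int\varphi\,d\tilde\nu_n-\int\varphi\,d\mu_n$, not a pointwise anti-concentration bound for $\mathbb{P}(|g_{n,j}(z)|\le t)$; the latter is an ingredient inside its proof, not its conclusion, so you would essentially be re-deriving Proposition~\ref{prop6} rather than invoking it. The cleaner course is to stick with your first sentence and apply Proposition~\ref{prop6} directly, mimicking the proof of Theorem~\ref{thm:a.e.converg_n_logn_derivative_prop3}.
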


\begin{theorem}\label{thm:a.e.converg_n_logn_derivative_prop9}
Let $\mu$ be a (deterministic) probability measure on $\mathbb{C}$ with at least polynomially decaying tail, i.e.  there exists $c > 0$ with 
$$\sup_{R \geq 1} R^c \mu( \{z \in \mathbb{C}, 
|z| > R \}) < \infty. $$
Let $(Z^{(n,0)}_k)_{1 \leq k \leq n}$ be i.i.d. random variables with distribution $\mu$. Then, for $1 \leq m(n) \leq n-1$ such that 
$$\frac{m(n)}{n/\log n} \underset{n \rightarrow \infty}{\longrightarrow} 0,$$
$(\tilde\mu_{n,  m(n)})_{n \geq 2}$ converges almost surely to $\mu$. 

 \end{theorem}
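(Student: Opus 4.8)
The plan is to derive Theorem~\ref{thm:a.e.converg_n_logn_derivative_prop9} from Theorem~\ref{thm:prop8} by a harmless truncation of the initial configuration. Two of the three hypotheses of Theorem~\ref{thm:prop8} are at hand: the almost sure weak convergence $\tilde\mu_{n,0}\to\mu$ is the classical fact that the empirical measure of an i.i.d.\ sample converges almost surely to its law (test against a countable convergence-determining family of bounded continuous functions and apply the strong law of large numbers), and $m(n)/(n/\log n)\to 0$ is assumed. The real work is the last hypothesis, the uniform control of $n^{-B}\sup_{0\le j\le m(n)-1}\mathbb{E}[\min_{1\le k\le n}|Z_k^{(n,j)}|]$: when $\mu$ is heavy-tailed this expectation may be infinite for small~$n$, so I first replace the process by a truncated one for which it is bounded by a power of~$n$, and then show the truncation is asymptotically invisible.

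Fix $a>2/c$ and put $\hat Z_k^{(n,0)}:=Z_k^{(n,0)}$ if $|Z_k^{(n,0)}|\le n^a$ and $\hat Z_k^{(n,0)}:=0$ otherwise; keep the same coefficients $(\gamma_k^{(n,j)})$ and let $(\hat Z_k^{(n,j)})_{1\le k\le n}$ and $\hat{\tilde\mu}_{n,j}$ be the iterates of the circular construction started from $(\hat Z_k^{(n,0)})$. The point is that each circular-derivative step preserves the product of the moduli of the points: if $W(z)=\sum_{k}\gamma_k\,(w_k+z)\prod_{\ell\ne k}(z-w_\ell)$, then $W$ has degree~$n$ with leading coefficient $\sum_k\gamma_k$ and $W(0)=(-1)^{n-1}\bigl(\sum_k\gamma_k\bigr)\prod_\ell w_\ell$, so the product of the zeros of $W$ equals $-\prod_\ell w_\ell$ and in particular has modulus $\prod_\ell|w_\ell|$. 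Hence $\prod_{k}|\hat Z_k^{(n,j)}|=\prod_{k}|\hat Z_k^{(n,0)}|\le n^{an}$ for every $j$, and by the arithmetic--geometric mean inequality $\min_{1\le k\le n}|\hat Z_k^{(n,j)}|\le\bigl(\prod_{k}|\hat Z_k^{(n,j)}|\bigr)^{1/n}\le n^a$ deterministically, uniformly in $j$. Thus the modified process satisfies the last hypothesis of Theorem~\ref{thm:prop8} with $B=a$.

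It remains to handle $\hat{\tilde\mu}_{n,0}$ and to transfer the conclusion. Since $\mu(\{|z|>R\})\le C R^{-c}$, the union bound gives $\mathbb{P}\bigl(\max_{1\le k\le n}|Z_k^{(n,0)}|>n^a\bigr)\le C n^{1-ac}$, which is summable because $a>2/c$; by Borel--Cantelli, almost surely only finitely many $n$ are exceptional, so eventually $\hat Z_k^{(n,0)}=Z_k^{(n,0)}$ for all $k$ and hence, the coefficients being the same, $\hat Z_k^{(n,j)}=Z_k^{(n,j)}$ for all $j,k$. In particular $\hat{\tilde\mu}_{n,0}=\tilde\mu_{n,0}$ for all large $n$, so $\hat{\tilde\mu}_{n,0}\to\mu$ almost surely; Theorem~\ref{thm:prop8} applied to the modified process yields $\hat{\tilde\mu}_{n,m(n)}\to\mu$ almost surely; and since $\hat{\tilde\mu}_{n,m(n)}=\tilde\mu_{n,m(n)}$ for all large $n$ almost surely, $\tilde\mu_{n,m(n)}\to\mu$ almost surely.

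The one step that carries the difficulty is the verification of the moment-type hypothesis of Theorem~\ref{thm:prop8}; everything else is bookkeeping. When $c>1/2$ one can even dispense with the truncation: by independence $\mathbb{E}\bigl[(\prod_{k=1}^n|Z_k^{(n,0)}|)^{1/n}\bigr]=\bigl(\mathbb{E}|Z|^{1/n}\bigr)^n$, and the tail bound gives $\mathbb{E}\max(|Z|,1)^{1/n}\le 1+C/(nc-1)$, so this quantity stays bounded over $n\ge2$ and Theorem~\ref{thm:prop8} applies directly with $B=0$.
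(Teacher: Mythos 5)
Your proof is correct, but it follows a genuinely different route than the paper's. The paper derives this statement from Theorem~\ref{thm:prop8} and Lemma~\ref{lem:prop9}, where the key invariant is the \emph{maximum} modulus: Lemma~\ref{lem:prop9} shows, via the identity $2\,\mathrm{Re}\bigl(\frac{z+w}{z-w}\bigr)=\frac{2(|z|^2-|w|^2)}{|z-w|^2}$, that $\mathrm{Re}\sum_k\gamma_k\frac{z+Z_k}{z-Z_k}>0$ whenever $|z|>\max_k|Z_k|$, so the max modulus of the zero set is non-increasing in $j$. The paper then conditions on the initial configuration (making the moment hypothesis of Theorem~\ref{thm:prop8} a deterministic quantity) and invokes Borel--Cantelli with the polynomial tail to control $\max_k|Z_k^{(n,0)}|$. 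Your argument instead identifies a cleaner, exact invariant: by comparing the leading coefficient $\sum_k\gamma_k$ with the constant term $(-1)^{n-1}(\sum_k\gamma_k)\prod_\ell w_\ell$, the \emph{product of moduli} of the zeros is preserved exactly by the circular-derivative step, so the geometric mean (and hence the minimum, which is all Theorem~\ref{thm:prop8} needs) is conserved. You then handle heavy tails by truncating the initial points at level $n^a$ rather than by conditioning, and use Borel--Cantelli to show the truncation is asymptotically inactive. Both arguments are sound; the paper's controls the full maximum and reuses its Lemma~\ref{lem:prop4} template, while yours isolates an elegant algebraic conservation law that the paper doesn't observe explicitly (it is implicit in the remark that points on the unit circle stay on the unit circle) and buys the sharper aside that for $c>1/2$ no truncation or conditioning is needed at all.
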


The following figures illustrate for the distribution of the roots of $P$ and its circular variant randomized derivatives with $60$ initial points.
\begin{figure}[H] 
  \centering
\includegraphics[width=0.7\linewidth]{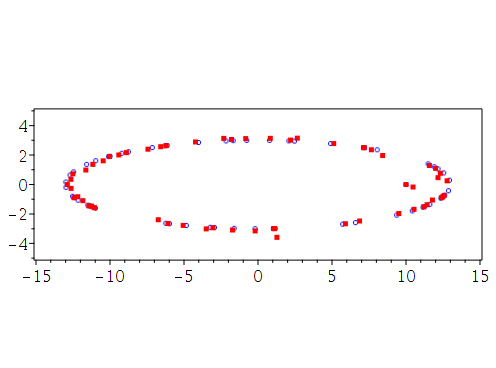} 
    \label{fig:rootV}
    \caption{The behavior of the roots of $P_n$ and the variant of the first randomized derivative polynomial.}
  \end{figure}  
\begin{figure}[H]
  \centering
\includegraphics[width=0.5\linewidth]{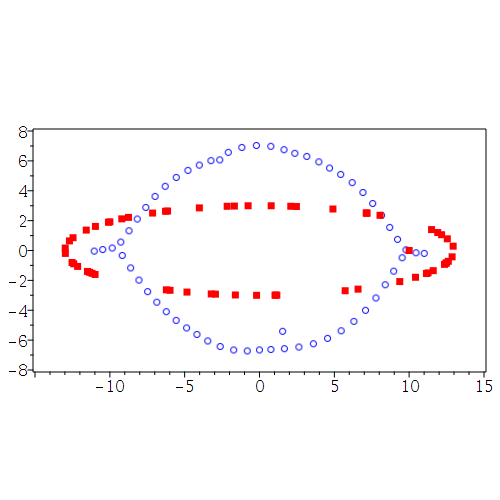}
   \label{fig:root_20variant_Rderivative_P}
   \caption{The behavior of the roots of $P_n$ and the variant of the $20$th randomized derivative polynomial.}
\end{figure}

From the above figure, we can see that the behavior of the roots of $P_n$ and the $k$-{\text{th}} circular variant of the randomized derivative polynomials are similar for $k$ small (Figure $3$) and very different for $k$ large (Figure $4$).

\section{The usual randomized derivation model}\label{sec:usual_randomized}
The proof of the theorems is based on the following proposition: 
\begin{proposition} \label{prop1}
For all $\beta > 0$, $A > 0$, and for all smooth functions $\varphi : \mathbb{C} \mapsto \mathbb{R}$, 
with compact support, there exist 
$C, K > 0$, depending only on 
$\beta$, $A$, $\varphi$, such that 
for all $n \geq 2$, 
$$ \mathbb{P} \left( \left|\int_{\mathbb{C}} \varphi d \nu_n
- \int_{\mathbb{C}}\varphi d \mu_n
\right| 
\geq \frac{K \log n }{n} \right) 
\leq C n^{-A} \left(1 + \mathbb{E} \left[
\min_{1 \leq k \leq n} |Z_k^{(n)}| \right]
\right). 
$$
\end{proposition}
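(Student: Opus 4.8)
\emph{Reduction via logarithmic potentials.} The plan is to first condition on the configuration $(Z^{(n)}_k)_{1\le k\le n}$ — legitimate since the $\gamma^{(n)}_k$ are independent of it — prove the bound with $\mathbb{E}[\min_k|Z^{(n)}_k|]$ replaced by the deterministic quantity $\min_k|Z^{(n)}_k|$, and then take expectations over $(Z^{(n)}_k)_k$. Dropping the superscript $(n)$ and writing $K:=\operatorname{supp}\varphi$, $R_K:=\sup_{z\in K}|z|$, $\Gamma_n:=\sum_{k=1}^n\gamma_k$, and
$$ S_n(z):=\frac{Q_n(z)}{P_n(z)}=\sum_{k=1}^n\frac{\gamma_k}{z-Z_k}, $$
I would use that the logarithmic potentials of $\mu_n$ and $\nu_n$ are $\frac1n\log|P_n|$ and $\frac1{n-1}(\log|Q_n|-\log\Gamma_n)$; testing $\frac1{2\pi}\Delta$ of these against the smooth compactly supported $\varphi$ and using $\int_{\mathbb C}\Delta\varphi\,dA=0$ ($dA$ = planar Lebesgue measure) to kill the constant $\log\Gamma_n$ gives
$$ \int\varphi\,d\nu_n-\int\varphi\,d\mu_n=\frac{1}{2\pi(n-1)}\int_{\mathbb C}\Delta\varphi(z)\log|S_n(z)|\,dA(z)+\frac{1}{n-1}\int\varphi\,d\mu_n, $$
the last term being deterministically $O(1/n)$. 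Since $|\log t|=\log_+t+\log_-t$ and $|\Delta\varphi|\le\|\Delta\varphi\|_\infty$ on $K$, it then remains to show that, conditionally on $(Z_k)_k$, each of $\int_K\log_+|S_n|\,dA$ and $\int_K\log_-|S_n|\,dA$ is $\le C\log n$ outside an event of probability $\le Cn^{-A}(1+\min_k|Z_k|)$.

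\emph{A uniform device.} Both halves I would handle the same way. Fix $\delta=\delta(\beta)\in(0,\min(1,\beta/2))$ and establish the conditional moment bounds
$$ \mathbb{E}\!\left[\int_K|S_n|^{\delta}\,dA\right]\le Cn^{2},\qquad \mathbb{E}\!\left[\int_K|S_n|^{-\delta}\,dA\right]\le C\bigl(1+\min_k|Z_k|\bigr). $$
Given these, Markov's inequality (with thresholds $n^{A+2}$, resp.\ $n^{A+1}$) shows $\int_K|S_n|^{\pm\delta}\,dA$ exceeds a fixed power of $n$ only with probability $\le Cn^{-A}(1+\min_k|Z_k|)$, and on the complementary event concavity of $t\mapsto\log(1+t)$ (Jensen on $K$ with normalized Lebesgue measure) yields
$$ \int_K\log_{\pm}|S_n|\,dA=\frac1\delta\int_K\log_+\!\bigl(|S_n|^{\pm\delta}\bigr)\,dA\le\frac{|K|}{\delta}\log\!\Bigl(1+\tfrac1{|K|}\int_K|S_n|^{\pm\delta}\,dA\Bigr)=O(\log n). $$

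\emph{The two moment bounds.} For the $+\delta$ bound I would use $|S_n(z)|\le\Gamma_n/\min_k|z-Z_k|$, together with $\mathbb{E}[\Gamma_n^{\delta}]\le(\mathbb{E}\Gamma_n)^{\delta}=(n\beta/2)^{\delta}$ (Jensen, $\delta\le1$) and $\int_K(\min_k|z-Z_k|)^{-\delta}\,dA(z)\le\sum_k\int_K|z-Z_k|^{-\delta}\,dA(z)\le n\sup_{a\in\mathbb C}\int_K|z-a|^{-\delta}\,dA(z)$, the last supremum being finite because $\delta<2$ (by radial rearrangement it is at most the integral of $|z-a|^{-\delta}$ over a disk of area $|K|$ centered at $a$). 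The $-\delta$ bound is the anti-concentration input and is proved pointwise in $z\in K$: let $k^\ast=k^\ast(z)$ minimize $|z-Z_k|$; conditioning on $(\gamma_k)_{k\ne k^\ast}$ we have $S_n(z)=\gamma_{k^\ast}/(z-Z_{k^\ast})+c$ with $c$ fixed, so $\{|S_n(z)|<s\}$ forces the positive real variable $\gamma_{k^\ast}$ into a fixed real interval of length $\le2s\min_k|z-Z_k|$; as the $\mathrm{Gamma}(\beta/2)$ density is bounded when $\beta\ge2$ and integrable (with an $x^{\beta/2-1}$ singularity at $0$) when $\beta<2$, that interval has $\mathrm{Gamma}(\beta/2)$-mass $\le C_\beta(s\min_k|z-Z_k|)^{\alpha}$ with $\alpha:=\min(1,\beta/2)$. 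Averaging over $(\gamma_k)_{k\ne k^\ast}$ gives $\mathbb{P}(|S_n(z)|<s\mid(Z_k)_k)\le\min\bigl(1,C_\beta(s\min_k|z-Z_k|)^{\alpha}\bigr)$, hence, for $\delta<\alpha$,
$$ \mathbb{E}\bigl[|S_n(z)|^{-\delta}\bigr]=\int_0^\infty\mathbb{P}\bigl(|S_n(z)|<u^{-1/\delta}\bigr)\,du\le C_{\beta,\delta}\bigl(\min_k|z-Z_k|\bigr)^{\delta}. $$
Integrating over $z\in K$ and using $\min_k|z-Z_k|\le|z|+\min_k|Z_k|\le R_K+\min_k|Z_k|$ with the subadditivity of $t\mapsto t^\delta$ gives the $-\delta$ moment bound.

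\emph{Conclusion and main obstacle.} Combining the two high-probability bounds above (and folding the $O(1/n)$ term into the $K\log n/n$ budget) gives the conditional form of the Proposition; taking the expectation over $(Z_k)_k$ turns $\min_k|Z_k|$ into $\mathbb{E}[\min_k|Z_k|]$, which is the assertion. I expect the hard part to be precisely the anti-concentration estimate $\mathbb{P}(|S_n(z)|<s\mid(Z_k)_k)\lesssim(s\min_k|z-Z_k|)^{\alpha}$: it is the only place where the absolute continuity of the law of the $\gamma_k$ is used, and it is also where the $\min_k|Z_k|$ in the final bound originates — it quantifies how far a point of $K$ can sit from the whole cloud $(Z_k)_k$.
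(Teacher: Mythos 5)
Your proposal is correct and proves the Proposition, but the technical implementation is genuinely different from the paper's in a way worth recording. The common core is the anti-concentration input: conditionally on all but one of the $\gamma_k$, the event $\{|S_n(z)|<s\}$ pins the remaining Gamma$(\beta/2)$ weight into an interval of length $O(s\,|z-Z_\cdot|)$, giving probability $O\bigl((s\,|z-Z_\cdot|)^{\alpha}\bigr)$ with $\alpha=\min(1,\beta/2)$. (You single out $k^\ast(z)$ minimizing $|z-Z_k|$; the paper instead always singles out the index with smallest $|Z_k|$ — either choice leads to the same $1+|Z_1^{(n)}|$ factor after bounding $|z-Z_\cdot|\le R+|Z_1^{(n)}|$ on the compact support.) From this common input the two arguments diverge. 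The paper handles $\log_+|S_n|$ by a tail bound on $\max_k\gamma_k$ followed by a deterministic level-set area estimate, and handles $\log_-|S_n|$ by truncating at $K_4\log n$ and applying Markov to $\mathbb{E}\bigl[(\log_-|S_n|-K_4\log n)_+\bigr]$. You instead establish uniform $\pm\delta$-moment bounds $\mathbb{E}\!\int_K|S_n|^{\delta}\lesssim n^2$ and $\mathbb{E}\!\int_K|S_n|^{-\delta}\lesssim 1+\min_k|Z_k|$ for $\delta\in(0,\alpha)$, apply Markov with polynomial thresholds, and on the good event convert these into $O(\log n)$ bounds on $\int_K\log_\pm|S_n|$ via the Jensen inequality $\int_K\log_+(|S_n|^{\pm\delta})\le|K|\log\bigl(1+|K|^{-1}\!\int_K|S_n|^{\pm\delta}\bigr)$. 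This buys a more unified treatment of the two halves and avoids the explicit disc-covering bookkeeping for $\log_+$; the $+\delta$ moment bound only needs $\mathbb{E}[\Gamma_n^\delta]=O(n^\delta)$ and $\int_K|z-a|^{-\delta}\,dA<\infty$ ($\delta<2$), which is somewhat lighter than the paper's $\max_k\gamma_k\le n^{(A+1)/c}$ argument. All steps — the log-potential reduction, the factoring through $S_n=Q_n/P_n$ and $\log\Gamma_n$ vanishing against $\Delta\varphi$, the choice $\delta<\min(1,\beta/2)$, and the final de-conditioning over $(Z_k)$ — check out.
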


\begin{proof}[Proof of Proposition \ref{prop1}]
By first conditioning on $(Z^{(n)}_k)_{1 \leq k \leq n}$, which does not change the distribution of 
$(\gamma^{(n)}_k)_{1 \leq k \leq n}$ since these families of variables are assumed to be independent, 
we can assume that $(Z^{(n)}_k)_{1 \leq k \leq n}$ is deterministic. 
Moreover, the ordering of
$(Z^{(n)}_k)_{1 \leq k \leq n}$ does not change the distribution of the random measure $\nu_n$, since $(\gamma^{(n)}_k)_{1 \leq k \leq n}$
are i.i.d.: we can assume that 
$|Z^{(n)}_1| \leq |Z^{(n)}_k| $
for $2 \leq k \leq n$. It is then enough to prove, for 
deterministic $(Z^{(n)}_k)_{1 \leq k \leq n}$, 
$$ \mathbb{P} \left( \left|\int_{\mathbb{C}} \varphi d \nu_n
- \int_{\mathbb{C}}\varphi d \mu_n \right| 
\geq \frac{K \log n }{n} \right) 
\leq C n^{-A} \left(1 +
 |Z_1^{(n)}| 
\right). 
$$

 We have 
$$n \int_{\mathbb{C}} \varphi d \mu_n 
= \frac{1}{2 \pi} \int_{\mathbb{C}} \log |P_n | \, \Delta \varphi \, d m (z)$$
and 
$$(n-1) \int_{\mathbb{C}} \varphi d \nu_n 
= \frac{1}{2 \pi} \int_{\mathbb{C}} \log |Q_n | \, \Delta \varphi \, d m (z)$$
where $m$ is the Lebesgue measure on $\mathbb{C}$. 
Hence, 
$$\int_{\mathbb{C}} \varphi d \nu_n
- \int_{\mathbb{C}}\varphi d \mu_n 
=  \frac{1}{n} \int_{\mathbb{C}} \varphi d \nu_n +  \frac{1}{2 \pi n} 
\int_{\mathbb{C}} \log \left| \sum_{k=1}^n 
\frac{ \gamma_k^{(n)}}{ z - Z^{(n)}_k} \right| \, \Delta \varphi \, d m (z).$$
The first term of the right-hand side is 
bounded by $1/n$ times the maximum of $|\varphi|$, and then by 
$K_1 \log n/ n$ for some $K_1 > 0$ depending only on $\varphi$. 
The second term is bounded by 
the maximum of $|\Delta \varphi|$, 
multiplied by 
$$\frac{1}{2 \pi n} 
\int_{\mathbb{D}_R} 
\log_+ \left| \sum_{k=1}^n 
\frac{ \gamma_k^{(n)}}{ z - Z^{(n)}_k} \right| 
d m(z) 
+ \frac{1}{2 \pi n} 
\int_{\mathbb{D}_R} 
\log_- \left| \sum_{k=1}^n 
\frac{ \gamma_k^{(n)}}{ z - Z^{(n)}_k} \right|  d m(z), 
$$
where $R \geq 1$ is chosen in such a way that 
$\varphi$ is supported on 
$$\mathbb{D}_R := \{ z \in \mathbb{C}, |z| \leq R\}. $$

It is then enough to show that there exists $C, K_2, K_3 > 0$, depending only on $\beta$, $A$ and $R$, such that

$$\mathbb{P} \left(
\int_{\mathbb{D}_R}
\log_+ \left| \sum_{k=1}^n 
\frac{ \gamma_k^{(n)}}{ z - Z^{(n)}_k} \right| 
d m(z)  > K_2 \log n \right) 
\leq C  n^{-A} (1 + |Z^{(n)}_1|) 
$$
and 
$$\mathbb{P} \left(
\int_{\mathbb{D}_R}
\log_- \left| \sum_{k=1}^n 
\frac{ \gamma_k^{(n)}}{ z - Z^{(n)}_k} \right| 
d m(z)  > K_3 \log n \right) 
\leq C  n^{-A} (1 + |Z^{(n)}_1|) 
$$

For the integral involving $\log_+$,
we first observe the following: 
for $u \geq 1$, and for $C_{\beta} > 0$ depending only on $\beta$, 
$$\mathbb{P} \left( \gamma_k^{(n)} \geq u
\right) = \frac{1}{ \Gamma(\beta/2)} 
\int_{u}^{\infty} e^{-t} t^{\beta/2-1} dt
\leq C_{\beta} 
\int_u^{\infty} e^{-t/2} dt 
\leq 2 C_{\beta} e^{-u/2}. 
$$
Since this probability, in particular, decreases faster than the $u^{-c}$ for some universal constant $c > 0$, we can write, using a union bound in order to deal with the maximum, 
$$\mathbb{P} \left( \underset{1 \leq k \leq n}{\max} \gamma^{(n)}_k 
\geq n^{(A+1)/c} \right) \leq C n^{-A} $$
for some $C$ depending only on $\beta$ and $A$. 
It is then enough to show that under the complementary event $\gamma_k^{(n)} \leq n^{(A+1)/c}$, we have an equality of the form 
$$\int_{\mathbb{D}_R}
\log_+ \left| \sum_{k=1}^n 
\frac{ \gamma_k^{(n)}}{ z - Z^{(n)}_k} \right| dm(z) \leq K_2 \log n.  $$
Now, for any integer $m \geq 0$, the term  
$\log_+$ just above
can be larger than 
$(10 +  (A+1)/c ) \log n + m$ only if one of the 
terms $\gamma_k^{(n)}/| z - Z^{(n)}_k|$
is larger than or equal to $n^{9+ (A+1)/c } e^m$, 
which restricts $z$ to a disc of radius 
$ e^{-m} n^{-9- (A+1)/c}\gamma_k^{(n)}
\leq e^{-m} n^{-9}$ if we assume  
$\gamma_k^{(n)} \leq n^{(A+1)/c}$. 
The area of the set of $z$ for which   $\log_+$ is larger than $(10 + (A+1)/c) \log n + m$ is then, by adding the areas of the $n$ small discs, dominated by 
$e^{-2m} n^{-17}$. 

Bounding $\log_+$ by $(10+(A+1)/c) \log n$ 
if it is smaller than $(10+(A+1)/c) \log n$, and by 
 $(10 + (A+1)/c) \log n + m +1$ if it is between 
 $(10 + (A+1)/c) \log n + m$ and $(10 + (A+1)/c) \log n + m+1$,
 we deduce that 
\begin{align*}
&  
\int_{\mathbb{D}_R}
\log_+ \left| \sum_{k=1}^n 
\frac{ \gamma_k^{(n)}}{ z - Z^{(n)}_k} \right| 
d m(z) 
\\ & \leq  
( \pi R^2) ((10 + (A+1)/c) \log n) 
+  \sum_{m=0}^{\infty}
 \mathcal{O} (e^{-2m} n^{-17}) 
 ((10 + (A+1)/c) \log n + m + 1)
\leq  K_2 \log n 
\end{align*}
where $K_2 > 0$ depends only on $A$ and $R$. 

It remains to prove the bound on the integral involving $\log_-$. 
If we replace $\log_-$ by its minimum with $K_4 \log n$
for some $K_4 > 0$ depending only on $\beta$, $A$, $R$, we get an integral on $\mathbb{D}_{R} $
bounded by $\pi R^2 K_4 \log n$, which shows that it is enough to check 

$$\mathbb{P} \left(
\int_{\mathbb{D}_R}
\left(\log_- \left| \sum_{k=1}^n 
\frac{ \gamma_k^{(n)}}{ z - Z^{(n)}_k} \right|  - K_4 \log n \right)_+
d m(z)  \geq  10 \log n  \right) 
\leq C  n^{-A} (1 + |Z^{(n)}_1|) 
$$
for $C > 0$ depending only on $\beta, A, R$. 
Since $10 \log n \geq 1$, by Markov inequality, it is enough to show 
$$ \int_{\mathbb{D}_R}
\mathbb{E} \left[ \left(\log_- \left| \sum_{k=1}^n 
\frac{ \gamma_k^{(n)}}{ z - Z^{(n)}_k} \right| 
- K_{4} \log n  \right)_+ \right] dm(z) 
 \leq C n^{-A} ( 1 + |Z^{(n)}_1|) .$$
The last expectation if equal to 
$$\int_{K_{4} \log n}^{\infty} 
\mathbb{P} \left( \left| \sum_{k=1}^n 
\frac{ \gamma_k^{(n)}}{ z - Z^{(n)}_k}  \right|
\leq e^{-t} \right)  dt.
$$
We recall that we assume $(Z^{(n)}_k)_{
1 \leq k \leq n}$ deterministic. 
If we condition on $(\gamma^{(n)}_k)_{2 \leq k \leq n}$, 
the conditional distribution of $\gamma^{(n)}_1$ remains Gamma of parameter $\beta/2$, since $(\gamma^{(n)}_k)_{k \geq 1}$ are i.i.d. 
The conditional probability of the event above is then bounded by the probability
that a Gamma variable of parameter $\beta/2$
is in an interval of length 
$\mathcal{O} (e^{-t} |z - Z^{(n)}_1|)$. 
For $\beta \geq 2$, the density of such a Gamma variable is bounded, so the probability to be in an interval of length $h$ is $\mathcal{O}_{\beta} (h)$. For $\beta < 2$, the density at $t$ 
is $(\Gamma(\beta/2))^{-1} t^{\beta/2 - 1}$, which gives a probability $\mathcal{O}_{\beta} (h^{\beta/2})$. 
Hence, in any case, the probability is dominated by $h^{\alpha}$ for some $\alpha \in (0,1]$ depending only on $\beta$, which gives 
$$\mathbb{P} \left( \left| \sum_{k=1}^n 
\frac{ \gamma_k^{(n)}}{ z - Z^{(n)}_k}  \right|
\leq e^{-t} \right) 
= \mathcal{O}_{\beta} ((e^{-t} |z - Z^{(n)}_1|)^{\alpha}) 
$$
Integrating for $t$ between $K_{4} \log n$ and infinity, we get 
$$\mathbb{E} \left[ \left(\log_- \left| \sum_{k=1}^n 
\frac{ \gamma_k^{(n)}}{ z - Z^{(n)}_k} \right| 
- K_{4} \log n  \right)_+ \right] 
= \mathcal{O}_{\beta} 
( e^{-  \alpha K_{4} \log n}   ( |Z^{(n)}_1| + |z| )^{\alpha}).$$
For $z \in \mathbb{D}_R$, we get, 
for $K_{4} := A/\alpha $, 
\begin{align*}
\mathbb{E} \left[ \left(\log_- \left| \sum_{k=1}^n 
\frac{ \gamma_k^{(n)}}{ z - Z^{(n)}_k} \right| 
- K_{4} \log n  \right)_+ \right] 
& = \mathcal{O}_{\beta}  
\left( n^{-A} ( (|Z^{(n)}_1| + R)^{\alpha} )  \right)
\\ & = \mathcal{O}_{\beta, R} \left( n^{-A}  (1 + |Z^{(n)}_1|)^{\alpha} \right). 
\end{align*}
Hence, 
$$
\int_{\mathbb{D}_R}
\mathbb{E} \left[ \left(\log_- \left| \sum_{k=1}^n 
\frac{ \gamma_k^{(n)}}{ z - Z^{(n)}_k} \right| 
- K_{4} \log n  \right)_+ \right] dm(z) 
 = \mathcal{O}_{\beta, R} \left( n^{-A} (1 + |Z^{(n)}_1|)^{\alpha} \right),$$
 which completes the proof of the proposition, since $\alpha \leq 1$.
\end{proof}
\begin{remark} \label{generaldistribution}
A careful look of the proof above shows that 
the proposition remains true as soon as the i.i.d., nonnegative random variables 
$(\gamma^{(n)}_k)_{n \geq 2, 1 \leq k \leq n}$ follow a distribution
with H\"older continuity and at least polynomially decaying tail, i.e.
$$  \sup_{R \geq 1} R^c \, \mathbb{P} (\gamma^{(2)}_1 > R )  < \infty$$
for some $c > 0$,
and
$$ \sup_{0 \leq a \leq b, \,  b-a \leq h} h^{-\alpha} \mathbb{P} (\gamma^{(2)}_1
\in [a,b]) < \infty $$
for some $\alpha \in (0,1]$. 
\end{remark}
We are now ready to prove Theorem \ref{thm:a.e.converg1stderivative}.
\begin{proof}[Proof of Theorem \ref{thm:a.e.converg1stderivative}]
It is enough to prove the following fact: 
the conditional probability that 
$(\nu_n)_{n \geq 1}$ converges to $\mu$ given $(Z^{(n)}_k)_{n \geq 1, 1 \leq k \le n} $ is almost surely equal to $1$. It is sufficient to show that this conditional probability is $1$ as soon as $(\mu_n)_{n \geq 1}$ tends to $\mu$, since this event, which depends only on $(Z^{(n)}_k)_{n \geq 1, 1 \leq k \le n} $,  is assumed to have probability $1$. 
Because of the conditioning, we can then suppose, without loss of generality, that $(Z^{(n)}_k)_{n \geq 1, 1 \leq k \le n} $  is deterministic.

Let $A  \geq 1$ be chosen in such a way that $\mu( \mathbb{D}_A) \geq 1/2$. 
We have 
$\mu_n (\mathbb{D}_{2A}) 
> 0$ for $n$ large enough, which implies that the point with smallest modulus in the support of $\mu_n$ has modulus at most $2A$. 
Hence, the sequence $(\min_{1 \leq k \leq n} |Z^{(n)}_k|)_{n \geq 1}$ is bounded, which implies, by applying Proposition \ref{prop1} for $A > 1$,
and Borel-Cantelli lemma, 
that for any smooth function $\varphi : \mathbb{C} \rightarrow \mathbb{R}$ with compact support,  almost surely, 
$$\left| \int_{\mathbb{C}} \varphi d \nu_n
- \int_{\mathbb{C}}\varphi d \mu_n  
\right| \leq \frac{K \log n}{n}$$
for all but finitely many values of $n$. In particular,   
$$ \int_{\mathbb{C}} \varphi d \nu_n
- \int_{\mathbb{C}}\varphi d \mu_n  
\underset{n \rightarrow \infty}{\longrightarrow} 0
$$
almost surely. 
From the convergence of $(\mu_n)_{n \geq 1}$ towards $\mu$, we deduce that almost surely, 
$$\int_{\mathbb{C}} \varphi d \nu_n 
\underset{n \rightarrow \infty}{\longrightarrow} \int_{\mathbb{C}} \varphi d \mu.$$
This convergence almost surely occurs for all functions $\varphi$ in a given countable family of smooth functions with compact support. 
This is sufficient to get vague  convergence of $(\nu_n)_{n \geq 1}$ towards $\mu$, and then weak convergence since all measures have 
total mass equal to $1$. 
\end{proof}
To prove Theorem \ref{thm:a.e.converg_n_logn_derivative_prop3}, we observe that the coefficients $(\gamma^{(n,j)}_k)_{1 \leq k \leq n+1-j}$ are independent of 
$ (Z_{\ell}^{(n,j-1)})_{1 \leq \ell \leq n+1-j}$, and then we are in the same situation as above when we go
from the points $ (Z_{\ell}^{(n,j-1)})_{1 \leq \ell \leq n+1-j}$ to the points $(Z_k^{(n,j)})_{1 \leq k \leq n-j}$. We can then apply Proposition \ref{prop1} here. 

\begin{proof}[Proof of Theorem \ref{thm:a.e.converg_n_logn_derivative_prop3}]
    Let $\varphi : \mathbb{C} \rightarrow \mathbb{R}$ be smooth with compact support, and let $\varepsilon> 0$. 
We have
\begin{align*}
    \mathbb{P} \left( \left| \int_{\mathbb{C}} \varphi d \mu_{n, m(n)} - \int_{\mathbb{C}} \varphi d \mu_{n, 0} 
\right| \geq \varepsilon \right) & \leq
\sum_{j=1}^{m(n)} \mathbb{P} \left( \left| \int_{\mathbb{C}} \varphi d \mu_{n, j-1} - \int_{\mathbb{C}} \varphi d \mu_{n, j} 
\right| \geq \frac{\varepsilon}{ m(n)} \right) 
\end{align*}
For $n$ larger than some integer $n_0 \geq 2$ depending only on $\varepsilon$, $\beta$, $A$, $\varphi$ and the sequence $(m(n))_{n \geq 2}$, 
we have 
$$\frac{\varepsilon}{ m(n)} \geq \frac{K \log n}{n-m(n)},$$
where $K$ is defined in Proposition \ref{prop1}. This is due to the fact that by assumption, $m(n) (\log n)/n$ tends to zero when $n \rightarrow \infty$, and 
 $n-m(n)$ is equivalent to $n$ since $m(n)/n \rightarrow 0$. 
We deduce that for $n \geq n_0$, 
$$\mathbb{P} \left( \left| \int_{\mathbb{C}} \varphi d \mu_{n, m(n)} - \int_{\mathbb{C}} \varphi d \mu_{n, 0} 
\right| \geq \varepsilon \right)
\leq \sum_{j=1}^{m(n)} \mathbb{P} \left( \left| \int_{\mathbb{C}} \varphi d \mu_{n, j-1} - \int_{\mathbb{C}} \varphi d \mu_{n, j} 
\right| \geq \frac{K \log n}{n-m(n)} \right) 
$$

 We can apply Proposition \ref{prop1} to the 
 $n+1-j$ points $ (Z_{k}^{(n,j-1)})_{1 \leq k \leq n+1-j}$
and the independent weights $(\gamma^{(n,j)}_k)_{1 \leq k \leq n+1-j}$. 
 Since $K \log n / (n-m(n)) \geq K \log (n+1-j) / (n+1-j)$ for $1 \leq j \leq m(n)$, 
 we get 
 $$\mathbb{P} \left( \left| \int_{\mathbb{C}} \varphi d \mu_{n, m(n)} - \int_{\mathbb{C}} \varphi d \mu_{n, 0} 
\right| \geq \varepsilon \right)
\leq \sum_{j=1}^{m(n)} C (n+1-j)^{-A} 
\left( 1+ \mathbb{E} \left[ \min_{1 \leq k \leq n+1-j}
|Z_{k}^{(n,j-1)}| \right] \right),
$$
for some $C$ depending only on $\beta, A, \varphi$. 
By assumption, 
$$\left( 1+ \mathbb{E} \left[ \min_{1 \leq k \leq n+1-j}
|Z_{k}^{(n,j-1)}| \right] \right) \leq C' n^B$$
for some $C' > 0$: the quantity $C'$ depends only
on $B$, $\beta$ and the distribution of the initial points $(Z_k^{(n,0)})_{1 \leq k \leq n}$. 
For $n$ larger that some integer $n_1 \geq 2$ depending only on the sequence $(m(n))_{n \geq 2}$, we have 
$n+1 -j \geq 9n/10$ for $1 \leq j \leq m(n)$, since 
$m(n)/n \rightarrow 0$. 
Hence, for $C'' > 0$ depending only on $\beta$, $A$, $B$, $\varphi$, and the distribution of $(Z_k^{(n,0)})_{1 \leq k \leq n}$, and for $n \geq \max(n_0, n_1)$, 
 $$\mathbb{P} \left( \left| \int_{\mathbb{C}} \varphi d \mu_{n, m(n)} - \int_{\mathbb{C}} \varphi d \mu_{n, 0} 
\right| \geq \varepsilon \right) \leq 
C'' m(n) n^{-A} n^{B} \leq C'' n^{B-A+1}.$$
By choosing $A = B + 3$, we deduce, since $C''$ does not depend on $n$, that 
$$\sum_{n \geq \max (n_0, n_1)} 
\mathbb{P} \left( \left| \int_{\mathbb{C}} \varphi d \mu_{n, m(n)} - \int_{\mathbb{C}} \varphi d \mu_{n, 0} 
\right| \geq \varepsilon \right) < \infty.$$
Taking $\varepsilon = 1/q$ and using Borel-Cantelli lemma, we deduce that  almost surely, for all 
integers $q \geq 1$, 
$$\left| \int_{\mathbb{C}} \varphi d \mu_{n, m(n)} - \int_{\mathbb{C}} \varphi d \mu_{n, 0} 
\right| <1/q$$
for all but finitely many $n \geq 2$. 
Hence, almost surely, 
$$\int_{\mathbb{C}} \varphi d \mu_{n, m(n)} - \int_{\mathbb{C}} \varphi d \mu_{n, 0} 
\underset{n \rightarrow \infty}{\longrightarrow} 
0.$$
From the convergence of $(\mu_{n,0})_{n \geq 1}$ towards $\mu$, we deduce that almost surely, 
$$\int_{\mathbb{C}} \varphi d  \mu_{n, m(n)}
\underset{n \rightarrow \infty}{\longrightarrow} \int_{\mathbb{C}} \varphi d \mu.$$
This convergence almost surely occurs for all functions $\varphi$ in a given countable family of smooth functions with compact support. 
This is sufficient to get vague  convergence of $(\mu_{n,m(n)})_{n \geq 1}$ towards $\mu$, and then weak convergence since all measures have 
total mass equal to $1$. 

\end{proof} 

\begin{proof} [Proof of Lemma \ref{lem:prop4}]
For $1 \leq j \leq m(n)$, and $1 \leq k \leq n$ 
such that $Z^{(n,j)}_{k}$ is not one of the points 
$(Z^{(n,j-1)}_{\ell})_{1 \leq \ell \leq n+1-j}$, we have 
$$\sum_{\ell = 1}^{n+1-j} \frac{ \gamma^{(n,j)}_{\ell}}{ Z^{(n,j)}_k - Z^{(n,j-1)}_{\ell}} = 0, $$
$$\sum_{\ell = 1}^{n+1-j} \frac{ \gamma^{(n,j)}_{\ell} \left(\overline{Z^{(n,j)}_k - Z^{(n,j-1)}_{\ell} }\right) }
{ |Z^{(n,j)}_k - Z^{(n,j-1)}_{\ell}|^2} = 0, $$
$$\left(\sum_{\ell = 1}^{n+1-j} \frac{ \gamma^{(n,j)}_{\ell}}{|Z^{(n,j)}_k - Z^{(n,j-1)}_{\ell}|^2}  \right) 
\overline{Z^{(n,j)}_k} 
=\sum_{\ell = 1}^{n+1-j}
\frac{ \gamma^{(n,j)}_{\ell}}{|Z^{(n,j)}_k - Z^{(n,j-1)}_{\ell}|^2}
\overline{Z^{(n,j-1)}_\ell},
$$
Hence, the points $(Z^{(n,j)}_{k})_{1 \leq k \leq n}$ are barycenters of 
the points $(Z^{(n,j-1)}_{\ell})_{1 \leq \ell \leq n+1-j}$, which implies that 
$$\max_{1 \leq k \leq n-j} |Z_k^{(n,j)}| \leq 
\max_{1 \leq \ell \leq n+1-j} |Z_\ell^{(n,j-1)}|,$$
and by induction, 
$$\max_{1 \leq k \leq n-j} |Z_k^{(n,j)}|
\leq \max_{1 \leq \ell \leq n} |Z_\ell^{(n,0)}|.$$
Hence, 
\begin{align*} \sup_{n \geq 2} n^{-B} \sup_{ 0 \leq j \leq m(n)-1} \mathbb{E} \left[ \min_{1 \leq k \leq n-j} |Z_k^{(n,j)}| \right]  
& \leq  \sup_{n \geq 2} n^{-B} \sup_{ 0 \leq j \leq m(n)-1} \mathbb{E} \left[ \max_{1 \leq k \leq n-j} |Z_k^{(n,j)}| \right]  
\\ & = \sup_{n \geq 2} n^{-B}
\mathbb{E} \left[ \max_{1 \leq k \leq n} |Z_k^{(n,0)}| \right]
< \infty.
\end{align*}

 \end{proof}

\begin{proof}[Proof of Theorem \ref{thm:a.e.converg_n_logn_derivative_prop5}]
We first condition on $(Z^{(n,0)}_k)_{1 \leq k \leq n}$, and we check that 
the assumptions of Theorem \ref{thm:a.e.converg_n_logn_derivative_prop3} are almost surely satisfied. 
The almost sure convergence of $\mu_{n,0}$ to $\mu$ is a direct application of the law of large numbers. 
It remains to check that the last assumption of Theorem \ref{thm:a.e.converg_n_logn_derivative_prop3} is almost surely satisfied, which is implied by the fact that for some $B > 0$, almost surely, 
$$\sup_{n \geq 2} n^{-B} 
\max_{1 \leq k \leq n} |Z_k^{(n,0)}| < \infty.$$
Notice that our initial conditioning 
on $(Z^{(n,0)}_k)_{1 \leq k \leq n}$ allows us to remove the expectation in the assumption of Lemma \ref{lem:prop4}. 
By Borel-Cantelli lemma, it is enough to check that 
$$\mathbb{P} \left(
\max_{1 \leq k \leq n} |Z_k^{(n,0)}| > n^B \right) 
= \mathcal{O}(n^{-2}), 
$$
and then, using a union bound, 
$$\mathbb{P} ( |Z| > n^B) 
= \mathcal{O} (n^{-3})$$
where $Z$ is a random variable of distribution $\mu$. Now, 
by assumption on $\mu$, 
$$\mathbb{P} (|Z| > n^B) 
= \mathcal{O}( n^{-Bc}),$$
so the bound we need is satisfied when $B \geq 3/c$. 

 \end{proof} 

\section{The circular variant of the randomized derivative model}\label{sec:variation_randomized} 
We prove the following: 
\begin{proposition} \label{prop6}
For all $\beta > 0$, $A > 0$, and for all smooth functions $\varphi : \mathbb{C} \mapsto \mathbb{R}$, 
with compact support, there exists 
$C, K > 0$, depending only on 
$\beta$, $A$, $\varphi$, such that 
for all $n \geq 2$, 
$$ \mathbb{P} \left( \left|\int_{\mathbb{C}} \varphi d \tilde\nu_n
- \int_{\mathbb{C}}\varphi d \mu_n
\right| 
\geq \frac{K \log n }{n} \right) 
\leq C n^{-A} \left(1 + \mathbb{E} \left[
\min_{1 \leq k \leq n} |Z_k^{(n)}| \right]
\right). 
$$
\end{proposition}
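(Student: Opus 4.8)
The plan is to follow the proof of Proposition~\ref{prop1} almost line by line, with one simplification and one genuinely new estimate that is special to the Cayley-type kernel $z\mapsto\frac{Z_k+z}{Z_k-z}$. First I would reduce to deterministic points exactly as before: conditioning on $(Z^{(n)}_k)_{1\le k\le n}$ does not change the law of the i.i.d.\ weights, and reordering the points leaves the law of $\tilde\nu_n$ unchanged, so it suffices to prove the bound $Cn^{-A}(1+|Z^{(n)}_1|)$ for deterministic points with $|Z^{(n)}_1|\le|Z^{(n)}_k|$ for all $k$. A key preliminary observation is that, contrary to the ordinary derivative, the degree does not drop: expanding the product gives
$$V_n(z)=-\sum_{k=1}^n\gamma^{(n)}_k\,(Z^{(n)}_k+z)\prod_{j\ne k}(z-Z^{(n)}_j),$$
a polynomial of degree exactly $n$ with leading coefficient $-\sum_k\gamma^{(n)}_k\ne 0$ almost surely. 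Hence $\mu_n$ and $\tilde\nu_n$ are both empirical measures of $n$ points, and the logarithmic potential identities give, with $m$ the Lebesgue measure on $\mathbb{C}$,
$$\int_{\mathbb{C}}\varphi\,d\tilde\nu_n-\int_{\mathbb{C}}\varphi\,d\mu_n=\frac{1}{2\pi n}\int_{\mathbb{C}}\log\Bigl|\sum_{k=1}^n\frac{Z^{(n)}_k+z}{Z^{(n)}_k-z}\gamma^{(n)}_k\Bigr|\,\Delta\varphi\,dm(z),$$
with no residual term of order $1/n$ (the term $\tfrac1n\int\varphi\,d\nu_n$ that appears in Proposition~\ref{prop1} disappears here, precisely because the degrees now match). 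Fixing $R\ge1$ with $\operatorname{supp}\varphi\subset\mathbb{D}_R$, it then suffices to show that, outside an event of probability $\le Cn^{-A}(1+|Z^{(n)}_1|)$, both of the integrals $\int_{\mathbb{D}_R}\log_{\pm}\bigl|\sum_k\frac{Z_k+z}{Z_k-z}\gamma_k\bigr|\,dm(z)$ are $\mathcal{O}(\log n)$.

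For the $\log_+$ part, the only new ingredient relative to Proposition~\ref{prop1} is the elementary bound, valid for $z\in\mathbb{D}_R$,
$$\Bigl|\frac{Z^{(n)}_k+z}{Z^{(n)}_k-z}\Bigr|=\Bigl|1+\frac{2z}{Z^{(n)}_k-z}\Bigr|\le 1+\frac{2R}{|Z^{(n)}_k-z|},$$
which shows that a single term of this rational function can be large only when $z$ is close to the associated pole $Z^{(n)}_k$ (in particular, poles of large modulus are harmless). After the same union bound discarding the event $\{\max_k\gamma^{(n)}_k\ge n^{(A+1)/c}\}$, of probability $\le Cn^{-A}$, the set of $z$ where $\log_+$ exceeds $(\mathrm{const})\log n+m$ is covered by $n$ discs of radius $\lesssim e^{-m}n^{-(\mathrm{large})}$, and summing the geometric series in $m$ exactly as in Proposition~\ref{prop1} yields $\int_{\mathbb{D}_R}\log_+|\cdots|\,dm\le K_2\log n$ on the complementary event.

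For the $\log_-$ part I would truncate $\log_-$ at $K_4\log n$ and apply Markov's inequality, reducing the claim to
$$\int_{\mathbb{D}_R}\int_{K_4\log n}^{\infty}\mathbb{P}\bigl(|X_z|\le e^{-t}\bigr)\,dt\,dm(z)\ \le\ Cn^{-A}(1+|Z^{(n)}_1|),\qquad X_z:=\sum_{k=1}^n\frac{Z^{(n)}_k+z}{Z^{(n)}_k-z}\gamma^{(n)}_k.$$
Conditioning on $(\gamma^{(n)}_k)_{k\ge2}$ makes $X_z$ an affine function of $\gamma^{(n)}_1$ with coefficient $a_1:=\frac{Z^{(n)}_1+z}{Z^{(n)}_1-z}$, so $\{|X_z|\le e^{-t}\}$ confines $\gamma^{(n)}_1$ to a real interval of length $\le 2e^{-t}/|a_1|$, and the H\"older--density estimate for the $\mathrm{Gamma}(\beta/2)$ law used in Proposition~\ref{prop1} gives $\mathbb{P}(|X_z|\le e^{-t})=\mathcal{O}_\beta\bigl((e^{-t}/|a_1|)^\alpha\bigr)$ with $\alpha=\min(1,\beta/2)\in(0,1]$. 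Splitting into the cases $|Z^{(n)}_1|>2R$, where $|a_1|\ge\tfrac13$, and $|Z^{(n)}_1|\le 2R$, where $1/|a_1|=|Z^{(n)}_1-z|/|z+Z^{(n)}_1|\le 3R/|z+Z^{(n)}_1|$, then choosing $K_4=A/\alpha$ and integrating in $t$, I would obtain a bound of order $n^{-A}\bigl(1+|z+Z^{(n)}_1|^{-\alpha}\bigr)$ up to a constant depending only on $\beta$ and $R$; since $\alpha<2$, and $\{z+Z^{(n)}_1:z\in\mathbb{D}_R\}\subset\mathbb{D}_{3R}$ in the relevant case, integrating this in $z$ over $\mathbb{D}_R$ contributes only a finite factor depending on $\beta$ and $R$, giving $\mathcal{O}_{\beta,R}(n^{-A})$, which is in fact stronger than the required bound $Cn^{-A}(1+|Z^{(n)}_1|)$.

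The step I expect to be delicate is this last anti-concentration estimate: the coefficient $a_1$ degenerates as $z\to-Z^{(n)}_1$, so $X_z$ ceases to vary genuinely with $\gamma^{(n)}_1$ near that point; this is the circular analogue of the factor $|z-Z^{(n)}_1|$ arising in the proof of Proposition~\ref{prop1}, and one must check that the induced singularity $|z+Z^{(n)}_1|^{-\alpha}$ is integrable on $\mathbb{D}_R$ uniformly in the position of $Z^{(n)}_1$, which holds precisely because $\alpha\le 1<2$. Everything else, including the H\"older continuity and polynomial tail of the Gamma law and the passage to the almost-sure convergence theorems via Borel--Cantelli, transfers verbatim from Section~\ref{sec:usual_randomized}.
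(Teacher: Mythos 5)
Your proposal is correct and follows essentially the same route as the paper: same conditioning and reordering, same observation that $V_n$ has degree $n$ so the $1/n$ remainder term of Proposition~\ref{prop1} disappears, same $\log_+$/$\log_-$ decomposition over $\mathbb{D}_R$, same union bound and disc-covering argument for $\log_+$, and same anti-concentration via conditioning on $(\gamma^{(n)}_k)_{k\ge 2}$ for $\log_-$. The only genuine divergence is at the end of the $\log_-$ estimate: the paper bounds $|z - Z^{(n)}_1| \le R(1+|Z^{(n)}_1|)$ uniformly and carries the factor $(1+|Z^{(n)}_1|)$ into the final estimate, whereas you split on $|Z^{(n)}_1| \lessgtr 2R$ and observe that when $|Z^{(n)}_1|$ is large the coefficient $a_1 = \frac{Z^{(n)}_1+z}{Z^{(n)}_1-z}$ is bounded below, which lets you drop the $(1+|Z^{(n)}_1|)$ factor entirely and obtain $\mathcal{O}_{\beta,R}(n^{-A})$; that is a small but genuine sharpening, harmless since it is stronger than what the proposition asserts.
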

\begin{proof}
By first conditioning on $(Z^{(n)}_k)_{1 \leq k \leq n}$, which does not change the distribution of 
$(\gamma^{(n)}_k)_{1 \leq k \leq n}$ since these families of variables are assumed to be independent, 
we can assume that $(Z^{(n)}_k)_{1 \leq k \leq n}$ is deterministic. 
Moreover, the ordering of
$(Z^{(n)}_k)_{1 \leq k \leq n}$ does not change the distribution of the random measure $\nu_n$, since $(\gamma^{(n)}_k)_{1 \leq k \leq n}$
are i.i.d.: we can assume that 
$|Z^{(n)}_1| \leq |Z^{(n)}_k| $
for $2 \leq k \leq n$. It is then enough to prove, for 
deterministic $(Z^{(n)}_k)_{1 \leq k \leq n}$, 
$$ \mathbb{P} \left( \left|\int_{\mathbb{C}} \varphi d \tilde\nu_n
- \int_{\mathbb{C}}\varphi d \mu_n \right| 
\geq \frac{K \log n }{n} \right) 
\leq C n^{-A} \left(1 +
 |Z_1^{(n)}| 
\right). 
$$

 We have 
$$n \int_{\mathbb{C}} \varphi d \mu_n 
= \frac{1}{2 \pi} \int_{\mathbb{C}} \log |P_n | \, \Delta \varphi \, d m (z)$$
and 
$$n \int_{\mathbb{C}} \varphi d \tilde\nu_n 
= \frac{1}{2 \pi} \int_{\mathbb{C}} \log |Q_n | \, \Delta \varphi \, d m (z)$$
where $m$ is the Lebesgue measure on $\mathbb{C}$. 
Hence, 

$$\int_{\mathbb{C}} \varphi d \tilde\nu_n
- \int_{\mathbb{C}}\varphi d \mu_n 
=     \frac{1}{2 \pi n} 
\int_{\mathbb{C}} \log \left|  \sum_{k=1}^n  \frac{Z^{(n)}_k + z}{Z^{(n)}_k - z} \gamma^{(n)}_k\right| \, \Delta \varphi \, d m (z).$$

The right-hand side is bounded by 
the maximum of $|\Delta \varphi|$, 
multiplied by

$$\frac{1}{2 \pi n} 
\int_{\mathbb{D}_R} 
\log_+ \left|   \sum_{k=1}^n  \frac{Z^{(n)}_k + z}{Z^{(n)}_k - z} \gamma^{(n)}_k \right| 
d m(z) 
+ \frac{1}{2 \pi n} 
\int_{\mathbb{D}_R} 
\log_- \left|  \sum_{k=1}^n  \frac{Z^{(n)}_k + z}{Z^{(n)}_k - z} \gamma^{(n)}_k\right|  d m(z), 
$$
where $R \geq 1$ is chosen in such a way that 
$\varphi$ is supported on 
$$\mathbb{D}_R := \{ z \in \mathbb{C}, |z| \leq R\}. $$

It is then enough to show that there exists $C, K_2, K_3 > 0$, depending only on $\beta$, $A$ and $R$, such that

$$\mathbb{P} \left(
\int_{\mathbb{D}_R}
\log_+ \left|\sum_{k=1}^n  \frac{Z^{(n)}_k + z}{Z^{(n)}_k - z} \gamma^{(n)}_k \right| 
d m(z)  > K_2 \log n \right) 
\leq C  n^{-A} (1 + |Z^{(n)}_1|) 
$$
and 
$$\mathbb{P} \left(
\int_{\mathbb{D}_R}
\log_- \left| \sum_{k=1}^n  \frac{Z^{(n)}_k + z}{Z^{(n)}_k - z} \gamma^{(n)}_k\right| 
d m(z)  > K_3 \log n \right) 
\leq C  n^{-A} (1 + |Z^{(n)}_1|) 
$$

For the integral involving $\log_+$,
we first observe, as before, that   
$$\mathbb{P} \left( \underset{1 \leq k \leq n}{\max} \gamma^{(n)}_k 
\geq n^{(A+1)/c} \right) \leq  C n^{-A} $$
for some $C$ depending only on $\beta$ and $A$ and $c > 0$ universal.  
It is then enough to show that under the complementary event $\gamma_k^{(n)} \leq n^{(A+1)/c}$, we have an equality of the form

$$\int_{\mathbb{D}_R}
\log_+ \left| \sum_{k=1}^n  \frac{Z^{(n)}_k + z}{Z^{(n)}_k - z} \gamma^{(n)}_k \right| dm(z) \leq K_2 \log n.  $$

Now, for any integer $m \geq 0$, the term  
$\log_+$ just above
can be larger than 
$(10  + (A+1)/c) \log n + m$ only if one of the 
terms 
$$ \frac{Z^{(n)}_k + z}{Z^{(n)}_k - z} \gamma^{(n)}_k 
= \left(\frac{2 Z^{(n)}_k }{Z^{(n)}_k - z} - 1\right) 
\gamma^{(n)}_k $$
has modulus larger than or equal to $n^{9 + (A+1)/c} e^m$,
which implies, for $\gamma_k^{(n)} \leq n^{(A+1)/c}$, 
$$\left| \frac{2 Z^{(n)}_k }{Z^{(n)}_k - z} \right| 
\geq n^9 e^m - 1 \geq n^9 e^{m} (1 - 2^{-9}),
$$
$$|Z^{(n)}_k - z| \leq \frac{2 Z^{(n)}_k}{n^9 e^{m} (1 - 2^{-9})},$$
which restricts $z$ to a disc of radius at most  $3 e^{-m} n^{-9}|Z^{(n)}_k|$. 
Moreover, if $|Z^{(n)}_k| \geq 2R$, the 
inequality above cannot be satisfied for $z \in \mathbb{D}_R$, since in this case, $|Z_{k}^{(n)} - z| \geq |Z_{k}^{(n)}|/2$ and then 
$$
\left| \frac{2 Z^{(n)}_k }{Z^{(n)}_k - z} \right|  
\leq 4 <  n^9 e^m (1 - 2^{-9}).$$
The set of $z \in \mathbb{D}_R$
for  which   $\log_+$ is larger than $(10 + (A+1)/c) \log n + m$ is then included in the union of at most $n$
discs of radius $3 e^{-m} n^{-9}|Z^{(n)}_k|$
for $|Z^{(n)}_k| \leq 2R$. The total area of these discs
is dominated by $n e^{-2m} n^{-18} R^2$.

Bounding $\log_+$ by $(10 + (A+1)/c) \log n$ 
if it is smaller than $(10 + (A+1)/c) \log n$, and by 
 $(10 + (A+1)/c) \log n + m + 1$ if it is between 
 $(10 + (A+1)/c) \log n  + m$ and $(10 + (A+1)/c) \log n  + m+1$,
 we deduce that 

\begin{align*}
&  
\int_{\mathbb{D}_R}
\log_+ \left|  \sum_{k=1}^n  \frac{Z^{(n)}_k + z}{Z^{(n)}_k - z} \gamma^{(n)}_k \right| 
d m(z) 
\\ & \leq  
( \pi R^2) (10 + (A+1)/c) \log n
+  \sum_{m=0}^{\infty}
 \mathcal{O} (e^{-2m} n^{-17} R^2) 
 ( (10 + (A+1)/c) \log n + m + 1)
\leq  K_2 \log n 
\end{align*}

where $K_2 > 0$ depends only on $A$ and $R$. 

It remains to prove the bound on the integral involving $\log_-$. 
If we replace $\log_-$ by its minimum with $K_4 \log n$
for some $K_4 > 0$ depending only on $\beta$, $A$, $R$, we get an integral on $\mathbb{D}_{R} $
bounded by $\pi R^2 K_4 \log n$, which shows that it is enough to check

$$\mathbb{P} \left(
\int_{\mathbb{D}_R}
\left(\log_- \left| \sum_{k=1}^n  \frac{Z^{(n)}_k + z}{Z^{(n)}_k - z} \gamma^{(n)}_k  \right|  - K_4 \log n \right)_+
d m(z)  \geq  10 \log n  \right) 
\leq C  n^{-A} (1 + |Z^{(n)}_1|) 
$$
for $C > 0$ depending only on $\beta, A, R$. 
Since $10 \log n \geq 1$, by Markov inequality, it is enough to show 
$$ \int_{\mathbb{D}_R}
\mathbb{E} \left[ \left(\log_- \left| \sum_{k=1}^n  \frac{Z^{(n)}_k + z}{Z^{(n)}_k - z} \gamma^{(n)}_k  \right| 
- K_{4} \log n  \right)_+ \right] dm(z) 
 \leq C n^{-A} ( 1 + |Z^{(n)}_1|) .$$
 
The last expectation is equal to 

$$\int_{K_{4} \log n}^{\infty} 
\mathbb{P} \left( \left|\sum_{k=1}^n  \frac{Z^{(n)}_k + z}{Z^{(n)}_k - z} \gamma^{(n)}_k   \right|
\leq e^{-t} \right)  dt.
$$

We recall that we assume $(Z^{(n)}_k)_{ 
1 \leq k \leq n}$ deterministic. 
If we condition on $(\gamma^{(n)}_k)_{k \geq 2}$, 
the conditional distribution of $\gamma^{(n)}_1$ remains Gamma of parameter $\beta/2$, since $(\gamma^{(n)}_k)_{k \geq 1}$ are i.i.d. 
The conditional probability of the event above is then bounded by the probability
that a Gamma variable of parameter $\beta/2$
is in an interval of length 
$\mathcal{O} (e^{-t} |z - Z^{(n)}_1||z + Z^{(n)}_1|^{-1})$.

For $\beta \geq 2$, the density of such a Gamma variable is bounded, so the probability to be in an interval of length $h$ is $\mathcal{O}_{\beta} (h)$. For $\beta < 2$, the density at $t$ 
is at most $(\Gamma(\beta/2))^{-1} t^{\beta/2 - 1}$, which gives a probability $\mathcal{O}_{\beta} (h^{\beta/2})$. 
Hence, 

$$\mathbb{P} \left( \left|\sum_{k=1}^n  \frac{Z^{(n)}_k + z}{Z^{(n)}_k - z} \gamma^{(n)}_k  \right|
\leq e^{-t} \right) 
=  \mathcal{O}_{\beta}\left( \left(e^{-t} |z - Z^{(n)}_1||z + Z^{(n)}_1|^{-1}\right)^{\min(1, \beta/2)} \right), 
$$
and 
$$\int_{K_{4} \log n}^{\infty} 
\mathbb{P} \left( \left|\sum_{k=1}^n  \frac{Z^{(n)}_k + z}{Z^{(n)}_k - z} \gamma^{(n)}_k  \right|
\leq e^{-t} \right) dt
= \mathcal{O}_{\beta} \left(\left(n^{-K_4} |z - Z^{(n)}_1||z + Z^{(n)}_1|^{-1}\right)^{\min(1, \beta/2)}
\right). 
$$
For $z \in \mathbb{D}_R$, we deduce 
$$\int_{K_{4} \log n}^{\infty} 
\mathbb{P} \left( \left|\sum_{k=1}^n  \frac{Z^{(n)}_k + z}{Z^{(n)}_k - z} \gamma^{(n)}_k  \right|
\leq e^{-t} \right) dt
= \mathcal{O}_{\beta, R} \left(\left(n^{-K_4} (1 + |Z^{(n)}_1|)|z + Z^{(n)}_1|^{-1}\right)^{\min(1, \beta/2)}
\right). $$
Integrating in $z$, we deduce 
\begin{align*}
& \int_{\mathbb{D}_R}
\mathbb{E} \left[ \left(\log_- \left| \sum_{k=1}^n  \frac{Z^{(n)}_k + z}{Z^{(n)}_k - z} \gamma^{(n)}_k\right| 
- K_{4} \log n  \right)_+ \right] dm(z)
\\ &  =  \mathcal{O}_{R, \beta}
\left( n^{-K_4 \min(1, \beta/2)} (1 + |Z_1^{(n)})
\int_{\mathbb{D}_R} |z + Z_1^{(n)}|^{-\min(1, \beta/2)} 
dm (z)\right).
 \end{align*}
We have 
$$\int_{\mathbb{D}_R} |z + Z_1^{(n)}|^{-\min(1, \beta/2)} 
\mathds{1}_{|z + Z_1^{(n)}| \geq 1} dm (z) \leq \pi R^2$$
since the quantity to integrate is bounded by $1$.
On the other hand
\begin{align*}\int_{\mathbb{D}_R} |z + Z_1^{(n)}|^{-\min(1, \beta/2)} 
\mathds{1}_{|z + Z_1^{(n)}| < 1} dm (z)
& \leq \int_{\mathbb{D}_1} |z|^{-\min(1, \beta/2)} dm(z) 
\\ & = \int_0^1 \int_0^{2 \pi} r^{-\min(1, \beta/2)} r dr d\theta  \\ & \leq 2 \pi.
\end{align*}
By choosing $K_4 \geq A/ \min(1, \beta/2)$, we deduce
$$
\int_{\mathbb{D}_R}
\mathbb{E} \left[ \left(\log_- \left| \sum_{k=1}^n  \frac{Z^{(n)}_k + z}{Z^{(n)}_k - z} \gamma^{(n)}_k\right| 
- K_{4} \log n  \right)_+ \right] dm(z) 
 = \mathcal{O}_{\beta, R} \left( n^{-A} (1 + |Z^{(n)}_1|) \right)$$
which completes the proof
\end{proof}

\begin{proof}[Proof of Theorem \ref{thm:prop7}]
We deduce this theorem exactly in the same way as we deduced
Theorem \ref{thm:a.e.converg1stderivative} from Proposition \ref{prop1}. 
\end{proof}

\begin{proof}[Proof of Theorem \ref{thm:prop8}]
    Let $\varphi : \mathbb{C} \rightarrow \mathbb{R}$ be smooth with compact support, and let $\varepsilon> 0$. 
We have
\begin{align*}
    \mathbb{P} \left( \left| \int_{\mathbb{C}} \varphi d \tilde\mu_{n, m(n)} - \int_{\mathbb{C}} \varphi d \tilde\mu_{n, 0} 
\right| \geq \varepsilon \right) & \leq
\sum_{j=1}^{m(n)} \mathbb{P} \left( \left| \int_{\mathbb{C}} \varphi d \tilde\mu_{n, j-1} - \int_{\mathbb{C}} \varphi d \tilde\mu_{n, j} 
\right| \geq \frac{\varepsilon}{ m(n)} \right) 
\end{align*}
For $n\ge n_0 \geq 2$ depending only on $\varepsilon$, $\beta$, $A$, $\varphi$ and the sequence $(m(n))_{n \geq 2}$, 
we have
$$\frac{\varepsilon}{ m(n)} \geq \frac{K \log n}{n},$$
where $K$ is defined in Proposition \ref{prop6}: indeed, $m(n) (\log n)/n$ tends to zero when $n \rightarrow \infty$. 
We deduce that for $n \geq n_0$, 
$$\mathbb{P} \left( \left| \int_{\mathbb{C}} \varphi d \tilde\mu_{n, m(n)} - \int_{\mathbb{C}} \varphi d \tilde\mu_{n, 0} 
\right| \geq \varepsilon \right)
\leq \sum_{j=1}^{m(n)} \mathbb{P} \left( \left| \int_{\mathbb{C}} \varphi d \tilde\mu_{n, j-1} - \int_{\mathbb{C}} \varphi d \tilde\mu_{n, j} 
\right| \geq \frac{K \log n}{n} \right) 
$$
 We can apply Proposition \ref{prop6} to the 
 $n$ points $ (Z_{k}^{(n,j-1)})_{1 \leq k \leq n}$
and the independent weights $(\gamma^{(n,j)}_k)_{1 \leq k \leq n}$. 
We get 
 $$\mathbb{P} \left( \left| \int_{\mathbb{C}} \varphi d \tilde\mu_{n, m(n)} - \int_{\mathbb{C}} \varphi d \tilde\mu_{n, 0} 
\right| \geq \varepsilon \right)
\leq \sum_{j=1}^{m(n)} C n^{-A} 
\left( 1+ \mathbb{E} \left[ \min_{1 \leq k \leq n}
|Z_{k}^{(n,j-1)}| \right] \right),
$$
for some $C$ depending only on $\beta, A, \varphi$. 
By assumption, 
$$\left( 1+ \mathbb{E} \left[ \min_{1 \leq k \leq n}
|Z_{k}^{(n,j-1)}| \right] \right) \leq C' n^B$$
for some $C' > 0$ depending only 
on $B$, $\beta$ and the distribution of the initial points $(Z_k^{(n,0)})_{1 \leq k \leq n}$. 
Hence, for $n \geq \max(n_0, n_1)$, 
$$\mathbb{P} \left( \left| \int_{\mathbb{C}} \varphi d \tilde\mu_{n, m(n)} - \int_{\mathbb{C}} \varphi d \tilde\mu_{n, 0} 
\right| \geq \varepsilon \right) \leq 
C' m(n) n^{-A} n^{B} \leq C' n^{B-A+1}.$$
By choosing $A \ge B + 3$, we deduce the convergence of $(\tilde\mu_{n,m(n)})_{n \geq 1}$ towards $\mu$, by using Borel-Cantelli lemma. 

\end{proof}

\begin{lemma}\label{lem:prop9}
If for some $B > 0$
$$ \sup_{n \geq 2} n^{-B}   \mathbb{E} \left[ \max_{1 \leq k \leq n} |Z_k^{(n,0)}| \right] < \infty, 
$$
then last condition of Theorem \ref{thm:prop8}
is satisfied. 
 \end{lemma}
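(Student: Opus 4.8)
The plan is to follow the proof of Lemma~\ref{lem:prop4} almost verbatim, the only new ingredient being a ``circular barycenter'' estimate that replaces the ordinary barycenter computation. Concretely, I will show that in the circular variant the passage from $(Z_\ell^{(n,j-1)})_{1\le \ell\le n}$ to $(Z_k^{(n,j)})_{1\le k\le n}$ never increases the maximal modulus, so that $\max_{1\le k\le n}|Z_k^{(n,j)}|\le \max_{1\le \ell\le n}|Z_\ell^{(n,0)}|$ for every $0\le j\le m(n)$, and then dominate the minimum by the maximum exactly as at the end of the proof of Lemma~\ref{lem:prop4}.

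First I fix $1\le j\le m(n)$ and a root $Z=Z_k^{(n,j)}$ of $z\mapsto \sum_{k=1}^n \gamma_k^{(n,j)}(Z_k^{(n,j-1)}+z)\prod_{\ell\ne k}(z-Z_\ell^{(n,j-1)})$. If $Z$ coincides with one of the $Z_\ell^{(n,j-1)}$ there is nothing to prove. Otherwise, writing $w_\ell:=Z_\ell^{(n,j-1)}$ and $\gamma_\ell:=\gamma_\ell^{(n,j)}$, dividing the defining equation by $\prod_\ell(Z-w_\ell)\ne0$ yields
\[
\sum_{\ell=1}^n \gamma_\ell\,\frac{w_\ell+Z}{Z-w_\ell}=0 .
\]
Multiplying the $\ell$-th summand by $\overline{Z-w_\ell}/\overline{Z-w_\ell}$ and noting that $(w_\ell+Z)\overline{(Z-w_\ell)}=|Z|^2-|w_\ell|^2+\bigl(w_\ell\overline{Z}-Z\overline{w_\ell}\bigr)$ has real part $|Z|^2-|w_\ell|^2$ (the bracketed term being purely imaginary), the real part of the last identity reads
\[
\sum_{\ell=1}^n \frac{\gamma_\ell}{|Z-w_\ell|^2}\bigl(|Z|^2-|w_\ell|^2\bigr)=0 ,
\]
so $|Z|^2$ is a convex combination of the numbers $|w_\ell|^2$ with the strictly positive weights $\gamma_\ell/|Z-w_\ell|^2$. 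Hence $|Z|^2\le \max_\ell|w_\ell|^2$, that is, $\max_{1\le k\le n}|Z_k^{(n,j)}|\le \max_{1\le \ell\le n}|Z_\ell^{(n,j-1)}|$.

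Iterating this bound over $j$ gives $\max_{1\le k\le n}|Z_k^{(n,j)}|\le \max_{1\le \ell\le n}|Z_\ell^{(n,0)}|$ for all $0\le j\le m(n)$, whence
\[
\sup_{n\ge2} n^{-B}\sup_{0\le j\le m(n)-1}\mathbb{E}\Bigl[\min_{1\le k\le n}|Z_k^{(n,j)}|\Bigr]
\le \sup_{n\ge2} n^{-B}\,\mathbb{E}\Bigl[\max_{1\le k\le n}|Z_k^{(n,0)}|\Bigr]<\infty ,
\]
which is precisely the last hypothesis of Theorem~\ref{thm:prop8}. I do not anticipate any genuine difficulty: the only step that is not a literal transcription of the proof of Lemma~\ref{lem:prop4} is the elementary identity $\operatorname{Re}\bigl[(w_\ell+Z)\overline{(Z-w_\ell)}\bigr]=|Z|^2-|w_\ell|^2$, and the potential degeneracies — roots of multiplicity larger than one, coincident $w_\ell$'s, or $Z$ hitting some $w_\ell$ — are all handled trivially, since the division step only needs $\prod_\ell(Z-w_\ell)\ne0$ and the desired bound is immediate when that product vanishes. (One also records that the defining polynomial has degree exactly $n$, its leading coefficient being $\sum_k\gamma_k^{(n,j)}>0$ almost surely, so $(Z_k^{(n,j)})_{1\le k\le n}$ is a well-defined multiset.)
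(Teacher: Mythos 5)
Your proof is correct and follows essentially the same route as the paper's. The paper computes $2\operatorname{Re}\bigl(\frac{z+w}{z-w}\bigr)=\frac{2(|z|^2-|w|^2)}{|z-w|^2}$ and deduces that the real part of $\sum_k\gamma_k\frac{z+w_k}{z-w_k}$ is strictly positive whenever $|z|>\max_\ell|w_\ell|$, so no root lies outside that disc; your version takes the real part directly at the root $Z$ and observes that $|Z|^2$ is a convex combination of the $|w_\ell|^2$ with weights $\gamma_\ell/|Z-w_\ell|^2$ — the same elementary computation phrased as an exact barycentric identity rather than a sign argument. Both conclude with the identical domination $\min\le\max$ step from Lemma~\ref{lem:prop4}.
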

 \begin{proof} 
 Observe that
 \begin{equation}
     2\,\text{Re}\,\left(\frac{z+Z^{(n,j-1)}}{z-Z^{(n,j-1)}}\right)=\frac{z+Z^{(n,j-1)}}{z-Z^{(n,j-1)}}+\frac{\bar{z}+\overline{Z^{(n,j-1)}}}{\bar{z}-\overline{Z^{(n,j-1)}}}=\frac{2(|z|^2-|Z^{(n,j-1)}|^2)}{(\bar{z}-\overline{Z^{(n,j-1)}})(z-Z^{(n,j-1)})}.
 \end{equation}
Therefore, for $|z|> \max\limits_{1\leq \ell \leq n}|Z^{(n,j-1)}_{\ell}|$, we have
$$\text{Re}\,\left(\sum\limits_{k=1}^{n}\frac{z+Z_{k}^{(n,j-1)}}{z-Z_{k}^{(n,j-1)}}\gamma^{(n,j)}_k \right) >0.$$ Hence, for $1 \leq j \leq m(n)$, and $1 \leq k \leq n$, we have 
\begin{equation} 
\max_{1 \leq k \leq n} |Z_k^{(n,j)}| \leq 
\max_{1 \leq \ell \leq n} |Z_\ell^{(n,j-1)}|,
\end{equation}
Hence, 
\begin{align*} \sup_{n \geq 2} n^{-B} \sup_{ 0 \leq j \leq m(n)-1} \mathbb{E} \left[ \min_{1 \leq k \leq n} |Z_k^{(n,j)}| \right]  
& \leq  \sup_{n \geq 2} n^{-B} \sup_{ 0 \leq j \leq m(n)-1} \mathbb{E} \left[ \max_{1 \leq k \leq n} |Z_k^{(n,j)}| \right]  
\\ & = \sup_{n \geq 2} n^{-B}
\mathbb{E} \left[ \max_{1 \leq k \leq n} |Z_k^{(n,0)}| \right]
< \infty.
\end{align*}
\end{proof}

\begin{proof}[Proof of Theorem \ref{thm:a.e.converg_n_logn_derivative_prop9}]
We deduce this theorem from Theorem \ref{thm:prop8} and Lemma 
\ref{lem:prop9}, 
exactly in the same way as 
 Theorem \ref{thm:a.e.converg_n_logn_derivative_prop5} is deduced from Theorem
\ref{thm:a.e.converg_n_logn_derivative_prop3} and Lemma \ref{lem:prop4}
 \end{proof}

\section{Conclusion and Future Directions}
We believe that the conclusion of Theorem \ref{thm:a.e.converg_n_logn_derivative_prop3}, Theorem \ref{thm:a.e.converg_n_logn_derivative_prop5}, Theorem \ref{thm:prop8} and Theorem \ref{thm:a.e.converg_n_logn_derivative_prop9} are suboptimal, and that in fact the same statement should hold with order $o(n)$ of randomized derivatives. Moreover, although we made important progress with $o(\frac{n}{\log n})$, there is still room for improvement.
It would be great if one can make a connection between our results  with randomized derivatives and
the usual derivatives case which corresponds to $\beta =\infty$ in our setting. We also refer to \cite{michelen2022zeros} and \cite{michelen2023almost} for related conjectures.

\textbf{Acknowledgments.} 
AG thanks the European ERC 101054746 grant
ELISA for its support
(indirect costs).
TV is supported in part by NSF grant DMS-2005493. He also thanks Jonas Jalowy and Zakhar Kabluchko for useful discussions. The authors would like to thank the anonymous referee for carefully
reading the earlier version of this manuscript and provided valuable suggestions.


\end{document}